\DeclareMathOperator{\mdeg}{mdeg}
\DeclareMathOperator{\Pic}{Pic}
\DeclareMathOperator{\Tot}{Tot}
\DeclareMathOperator{\Cone}{Cone}
\DeclareMathOperator{\lcm}{lcm}
\newcommand{\Q}{\mathbb{Q}}
\newcommand{\Z}{\mathbb{Z}}
\newcommand{\R}{\mathbb{R}}
\newcommand{\N}{\mathbb{N}}
\newcommand{\proj}{\mathbb{P}}
\newcommand{\mfrak}{\mathfrak{m}}
\newcommand{\bfa}{\mathbf a}
\newcommand{\bfb}{\mathbf b}
\newcommand{\bfc}{\mathbf c}
\newcommand{\bfe}{\mathbf e}
\newcommand{\bft}{\mathbf t}
\newcommand{\kfield}{\mathbbm k}
\newcommand{\Acal}{\mathcal A}
\newcommand{\pilevar}{\Gamma}
\newcommand{\bfq}{\mathbf q}
\newcommand{\bfone}{\mathbf 1}
\newtheorem{theorem}{Theorem}
\newtheorem{lemma}[theorem]{Lemma}
\newtheorem{proposition}[theorem]{Proposition}
\theoremstyle{remark}
\newtheorem{example}[theorem]{Example}
\title{A duality theorem for syzygies of Veronese ideals}
\author{Stepan Paul\\ California Polytechnic State University\\San Luis Obispo, CA 93407--0403\\(805) 756--5552\\stpaul@calpoly.edu}
\date{}
\begin{document}


\maketitle


\abstract{
We prove a duality theorem for simplicial complexes arising from a combinatorial construction we define, which applies to the squarefree monomial complexes for Veronese ideals of projective spaces and weighted projective spaces. Our theorem yields a formula for the multigraded Betti numbers of these Veronese ideals in terms of the reduced homology groups of these complexes which is dual to one given by Bruns and Herzog. We apply this formula in several ways, including by giving an algorithm for finding the highest syzygy of such a Veronese ideal.
}

\section*{Introduction}

A \emph{Veronese embedding} of a weighted projective space $\proj(\bfq)$ is an embedding defined by a very ample line bundle $\mathscr H^{\otimes d}$ into the projectivization $\proj^N$ of its space of global sections (here $\mathscr H$ is the weighted hyperplane bundle). This paper is motivated by the study of the defining ideal $I_{\bfq,d}$ of the image of this embedding---the \emph{Veronese ideal}---and its quotient $R_{\bfq,d}:=S/I_{\bfq,d}$ via their free resolutions as a module over the homogeneous coordinate ring $S:=\kfield[X_0,\ldots,X_N]$ of $\proj^N$.

Given a finite subset $\Acal\subset\N^{n+1}$ (we use the convention $0\in\N$) and any $\bfc\in\Z^{n+1}$, we define the \emph{pile complex} $\pilevar_\bfc(\Acal)$ to be the abstract simplicial complex
\begin{equation}\label{definecomp}
\pilevar_\bfc(\mathcal A):=\left\{\mathcal S\subseteq\mathcal A\left|\sum_{\bfa\in\mathcal S}\bfa\leq\bfc\right.\right\}.
\end{equation}
Here we are using the partial ordering on $\Z^{n+1}$ given by $\bfa\geq\bfb$ if and only if $\bfa-\bfb\in\N^{n+1}$.

This construction is a variation on Bruns and Herzog's \emph{squarefree divisor complex}, $\Delta_\bfc(\Acal)$, given in~\cite{MR1481087}. In that paper, they show that the multigraded Betti numbers of the toric ideal $I_\Acal$ defined by $\Acal$ are computed by the reduced homology groups of these squarefree divisor complexes via
\begin{equation}\label{bhformula}
\beta_{i,\bfc}(S/I_\Acal)=\dim\tilde H_{i-1}(\Delta_\bfc(\Acal);\kfield)
\end{equation}
for $\bfc$ in the semigroup $\N\Acal$ generated by $\Acal$. In Proposition~\ref{iffprop}, we show that $\Delta_\bfc(\Acal)=\Gamma_\bfc(\Acal)$ for all $\bfc$ precisely when $I_\Acal$ cuts out a Veronese embedding of some weighted projective space or certain finite quotients thereof.

The main result of this paper is Theorem~\ref{intromaintheorem}, which is a purely combinatorial duality statement about pile complexes.


\begin{theorem}\footnote{During the writing and submission process, it was brought to the author's attention that Theorem~\ref{intromaintheorem} was proved previously by X. Dong in~\cite{MR1871691}.}\label{intromaintheorem}
Suppose that $\bfc\in\Z^{n+1}$, and that $\Acal\subset\N^{n+1}$ is a finite set with $N+1$ elements which generates a convex cone in $\R^{n+1}$ containing all of $\N^{n+1}$. Let 
\begin{equation}\label{tcheck}
\bft =\sum_{\bfa\in\Acal}\bfa,\qquad \check\bfc=\bft-\bfc-(1,\ldots,1).
\end{equation}
Then
$$\tilde H_{i-1}(\pilevar_\bfc(\Acal);\kfield)\cong\tilde H_{N-n-i-1}(\pilevar_{\check \bfc}(\Acal);\kfield)^*.$$
\end{theorem}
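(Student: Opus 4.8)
The plan is to realize the duality as an instance of Alexander duality, but carried out on the boundary sphere of a suitable simplex rather than on the abstract simplicial complex $\pilevar_\bfc(\Acal)$ itself. Write $\Acal=\{\bfa_0,\dots,\bfa_N\}$ and think of $\pilevar_\bfc(\Acal)$ as a subcomplex of the simplex $\Sigma$ on vertex set $\Acal$. The combinatorial Alexander dual of $\pilevar_\bfc(\Acal)$ inside $\Sigma$ is the complex whose faces are the complements $\Acal\setminus\mathcal S$ of the \emph{non-faces} $\mathcal S$; by the usual Alexander duality for simplicial complexes one has $\tilde H_{i-1}(\pilevar_\bfc(\Acal);\kfield)\cong\tilde H_{N-i-2}((\pilevar_\bfc(\Acal))^\vee;\kfield)^*$. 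So the entire theorem will follow once I identify $(\pilevar_\bfc(\Acal))^\vee$ with $\pilevar_{\check\bfc}(\Acal)$ \emph{up to the homotopy type needed for homology}, accounting for the shift from $N-i-2$ to $N-n-i-1$, i.e. a dimension drop of $n+1$.

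First I would pin down the non-faces. A subset $\mathcal S\subseteq\Acal$ fails to lie in $\pilevar_\bfc(\Acal)$ precisely when $\sum_{\bfa\in\mathcal S}\bfa\not\le\bfc$, i.e. when some coordinate $j$ has $\bigl(\sum_{\bfa\in\mathcal S}\bfa\bigr)_j\ge c_j+1$. Passing to the complement $\mathcal T=\Acal\setminus\mathcal S$ and using $\sum_{\bfa\in\Acal}\bfa=\bft$, this says $\bigl(\sum_{\bfa\in\mathcal T}\bfa\bigr)_j\le t_j-c_j-1=\check\bfc_j$ for that coordinate $j$. Hence $\mathcal T\in(\pilevar_\bfc(\Acal))^\vee$ iff $\sum_{\bfa\in\mathcal T}\bfa\le\check\bfc$ holds in \emph{at least one} coordinate, whereas $\mathcal T\in\pilevar_{\check\bfc}(\Acal)$ iff it holds in \emph{every} coordinate. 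So $(\pilevar_\bfc(\Acal))^\vee$ is the union of the $n+1$ subcomplexes $U_j=\{\mathcal T:(\sum_{\bfa\in\mathcal T}\bfa)_j\le\check\bfc_j\}$, and $\pilevar_{\check\bfc}(\Acal)$ is their intersection. I would then run a Mayer--Vietoris / nerve argument: each $U_j$ is a subcomplex of the form "sum of chosen $\bfa$'s in coordinate $j$ is bounded", which is a cone-like (in fact a matroid-type) complex, and more importantly every nonempty intersection of a subfamily of the $U_j$ is again a pile complex for a modified bound vector. The hypothesis that $\Acal$ spans a cone containing all of $\N^{n+1}$ is exactly what guarantees that each individual $U_j$ (and each partial intersection over a proper subset of coordinates) is contractible or has trivial reduced homology, because dropping one coordinate constraint leaves a complex with "enough room" — concretely, for each $j$ there is a vertex $\bfa\in\Acal$ with $(\bfa)_j=0$, or more generally the remaining constraints can always be satisfied, so these complexes are cones. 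Then the nerve of the cover $\{U_j\}$ is the full simplex on $n+1$ vertices, its boundary contributes an $\tilde H_{n-1}(S^{n})$-type class, and the Mayer--Vietoris spectral sequence collapses to give $\tilde H_{k}\bigl((\pilevar_\bfc(\Acal))^\vee\bigr)\cong\tilde H_{k-n}\bigl(\pilevar_{\check\bfc}(\Acal)\bigr)$ for all $k$. Combining this degree shift with the Alexander duality isomorphism $\tilde H_{i-1}(\pilevar_\bfc(\Acal))\cong\tilde H_{N-i-2}((\pilevar_\bfc(\Acal))^\vee)^*$ yields $\tilde H_{i-1}(\pilevar_\bfc(\Acal))\cong\tilde H_{N-i-2-n}(\pilevar_{\check\bfc}(\Acal))^*=\tilde H_{N-n-i-1}(\pilevar_{\check\bfc}(\Acal))^*$, as desired.

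The step I expect to be the real obstacle is verifying contractibility (or acyclicity) of the partial intersections $\bigcap_{j\in J}U_j$ for proper subsets $J\subsetneq\{0,\dots,n\}$ — equivalently, showing that imposing the box constraint in only a proper subset of coordinates always yields a contractible complex. This is where the cone hypothesis on $\Acal$ must be used in an essential way, and where a naive Mayer--Vietoris could fail if, say, the bound $\check\bfc_j$ were so negative that even the empty set violated it; part of the work is checking that for $\bfc$ outside a suitable range both sides of the claimed isomorphism vanish, so the statement is trivially true there, and that inside the relevant range the geometry is as described. A clean way to handle the contractibility uniformly is to produce, for each proper $J$, an explicit element $\bfa^\star\in\Acal$ lying in every face of $\bigcap_{j\in J}U_j$ (a cone point): since $\Acal$ generates a cone containing $\N^{n+1}$, one can find $\bfa^\star$ with small enough entries in the coordinates indexed by $J$ that adjoining it never violates those constraints. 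Once that cone point is exhibited, contractibility is immediate and the spectral sequence argument goes through mechanically.
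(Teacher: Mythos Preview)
Your approach is essentially the paper's: compute the combinatorial Alexander dual of $\pilevar_\bfc(\Acal)$, write it as the union $\bigcup_{j=0}^n U_j$ of one-coordinate pile complexes, show that every proper intersection $\bigcap_{j\in J}U_j$ is a cone (hence acyclic), and run the Mayer--Vietoris spectral sequence to obtain the shift $\tilde H_k\bigl(\pilevar_\bfc(\Acal)^\vee\bigr)\cong\tilde H_{k-n}\bigl(\pilevar_{\check\bfc}(\Acal)\bigr)$.

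Two small points to fix. First, your Alexander duality index is off by one: with $|\Acal|=N+1$ vertices one has $\tilde H_{i-1}(\pilevar_\bfc)\cong\tilde H_{N-i-1}(\pilevar_\bfc^\vee)^*$, not $\tilde H_{N-i-2}$; your closing equality ``$N-i-2-n=N-n-i-1$'' is an arithmetic slip that silently compensates for this. Second, the cone point for $\bigcap_{j\in J}U_j$ must have \emph{zero} entries in the $J$-coordinates, not merely small ones --- otherwise adjoining it to a face that is tight in some coordinate $j\in J$ would break the constraint. The hypothesis $\Cone(\Acal)\supseteq\N^{n+1}$ together with $\Acal\subset\N^{n+1}$ forces $\Cone(\Acal)$ to equal the nonnegative orthant, so each extreme ray $\R_{\ge0}\bfe_k$ must be generated by some $d_k\bfe_k\in\Acal$; choosing any $k\notin J$ then gives exactly the required cone point. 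This is precisely the content of the paper's lemma preceding the main proof.
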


This result may be interesting to some readers in its own right. For this reason, Section~\ref{combsec} is dedicated to its proof and sits logically independent of its applications to commutative algebra and algebraic geometry. Section~\ref{bgsec} contains some background and references on weighted projective space, Veronese embeddings, and multigraded Betti numbers, and Section~\ref{appsec} will contain applications of the theorem to these topics.

In particular, Section~\ref{appsec} begins with Theorem~\ref{dualformula}, which gives a formula dual to (\ref{bhformula}) in the case that $I_\Acal=I_{\bfq,d}$:
$$ \beta_{i,\bfc}(R_{\bfq,d})=\dim\tilde H_{N-n-i-1}(\pilevar_{\check \bfc}(\Acal);\kfield)\text{ for }\bfc\in\N\Acal.$$



We then go on to prove some immediate consequences of this formula, some of which recover known properties of weighted projective space, and some of which seem to be original results. We summarize by listing our findings here.

\begin{itemize}
\item The projective dimension of $R_{\bfq,d}$ is $N-n$; in particular, $R_{\bfq,d}$ is Cohen-Macaulay (Proposition~\ref{pdim}).
\item An upper bound on the Castelnuovo-Mumford regulary of $R_{\bfq,n}$, for which equality holds in the case of $\proj^n$ (Proposition~\ref{reg}).
\item $R_{\bfq,d}$ is Gorenstein if $dr|q_0+\cdots+q_n$, where $r=\lcm(q_0,\ldots,q_n)$ (Proposition~\ref{gorenstein}).
\item An algorithm for finding the highest nonzero syzygy of $R_{\bfq,n}$ with an explict formula in the case of $\proj^n$ (Theorem~\ref{introlastsyzygy}).
\end{itemize}



\section{Commutative Algebra and Algebraic Geometry Background}\label{bgsec}

In this section, we will review some background necessary to apply Theorem~\ref{intromaintheorem} towards finding Betti numbers of Veronese embeddings of weighted projective space.

\paragraph*{Notation}


Throughout we will use the convention that $\N\ni0$. If $\alpha=(\alpha_0,\ldots,\alpha_N)\in\Z^{N+1}$, then we will sometimes abbreviate the monomial $X_0^{\alpha_0}\cdots X_N^{\alpha_N}$ as $X^\alpha$. If $\Acal$ is any subset of $\Z^{n+1}$, we will write
\begin{itemize}
\item $\Z\Acal$ for the subgroup of $\Z^{n+1}$ generated by $\Acal$,
\item $\N\Acal$ for the semigroup of $\N$--linear combinations of elements of $\Acal$, and
\item $\Cone(\Acal)$ for the convex cone of positive $\R$--linear combinations of elements of $\Acal$.
\end{itemize}

We will use $\bfe_0,\ldots,\bfe_n$ for the standard basis elements of $\Z^{n+1}$. 
For elements $\bfa,\bfb\in\R^{n+1}$, we write $\bfa\cdot\bfb$ for the standard dot product $a_0b_0+\cdots+a_nb_n$. We will also find it convenient to define $\bfone:=(1,\ldots,1)\in\Z^{n+1}$.

\paragraph*{Toric Ideals}

The world of toric varieties has long been a testing ground for algebraic geometers due to their combinatorial description and concrete structure. We cite some basic definitions and results which can be found in, for example,~\cite{MR2810322}.  Given a finite subset $\mathcal A=\{\bfa_0,\ldots,\bfa_N\}$ of $\N^{n+1}$, we define a semigroup homomorphism
$$\begin{array}{rccc}
\Phi_{\mathcal A}:&\N^{N+1}&\longrightarrow&\N^{n+1}\\
&(x_0,\ldots,x_N)&\longmapsto&x_0\bfa_0+\cdots+x_N\bfa_N.
\end{array}$$
Then $\Phi_\Acal$ induces a semigroup algebra morphism
$$\begin{array}{rccc}
\hat\Phi_{\mathcal A}:&\kfield[X_0,\ldots,X_N]&\longrightarrow&\kfield[Y_0,\ldots,Y_n]\\
&X_i&\longmapsto&Y^{\mathbf a_i}.
\end{array}$$
The \emph{toric ideal} $I_\Acal$ of $S:=\kfield[X_0,\ldots,X_N]$ 
 corresponding to $\mathcal A$ is defined to be
the kernel of $\hat\Phi_\Acal$. Notice that
$$I_{\mathcal A}=\left\langle\left. X^\alpha- X^\beta\right|\alpha,\beta\in\N^{N+1},\;\Phi_{\Acal}(\alpha)=\Phi_\Acal(\beta)\right\rangle.$$

If $\mathcal A$ is \emph{homogeneous}---that is, if there exists $\omega\in\Q^{n+1}$ such that $\omega\cdot\bfa=1$ for all $\bfa\in\Acal$---then the vanishing set of $I_{\mathcal A}$ in $\proj^N$ is a \emph{projective toric variety}. 

A toric ideal has a natural multigrading by $\Z^{n+1}$ given by $\mdeg(X_i)=\bfa_i$. This multigraded structure is inherited by the polynomial ring $S$, the quotient ring $R_{\mathcal A}:=S/I_{\mathcal A}$, and all higher syzygies of $I_{\mathcal A}$ (as $S$-modules). If $\Acal$ is homogeneous, then we can recover the usual $\Z$--grading on $S$, given by $\deg(X_i)=1$, in a consistent way via $\deg(X^\alpha)=\omega\cdot\mdeg(X^\alpha)$.

\paragraph*{Veronese Embeddings}
In general, a very ample line bundle $\mathscr L$ on a projective variety $X$ defines an embedding $\nu_{\mathscr L}$ of $X$ into some projective space. If $X$ admits a very ample line bundle $\mathscr H$ which generates the Picard group $\Pic(X)$, then we refer to $\nu_d:=\nu_{\mathscr H^{\otimes d}}$ as the \emph{$d$th Veronese embedding} of $X$.

We remark that $X$ is of course isomorphic to the image of each of its Veronese embeddings. The variety $X$ may already be well-understood as an abstract variety, but we are interested in studying the images of these embeddings as subvarieties of projective space by investigating the defining ideals of these images.

\paragraph*{Weighted Projective Space}

We say that $\bfq=(q_0,\ldots,q_n)\in\N^{n+1}$ is a \emph{weights vector} if $\gcd(q_0,\ldots,q_n)$. The \emph{weighted projective space} $\proj(\bfq)$ is then defined to be the quotient of $\kfield^{n+1}\smallsetminus\{0\}$ under the $\kfield^*$--action 
$$\lambda.(x_0,\ldots,x_n)=(\lambda^{q_0}x_0,\ldots,\lambda^{q_n}x_n).$$ 
Notice that $\proj(\bfone)=\proj^n$.

The homogeneous coordinate ring of $\proj(\bfq)$ is $\kfield[Y_0,\ldots,Y_n]$ with the weighted $\Z$--grading $\deg(Y_i)=q_i$. We define the \emph{weighted hyperplane bundle} $\mathscr H$ on $\proj(\bfq)$ to be the line bundle corresponding to the class of divisors cut out by weighted homogeneous polynomials of weighted degree $r$ where $r:=\lcm(q_0,\ldots,q_n)$. In the case of $\proj^n$, $\mathscr H$ is the usual hyperplane bundle. The weighted hyperplane bundle does in fact generate $\Pic(\proj(\bfq))$, and so we can say $\mathscr H^{\otimes d}$ yields the $d$th Veronese embedding of $\proj(\bfq)$.

We can understand the $d$th Veronese embedding of $\proj(\bfq)$ by thinking of weighted projective space as a projective toric variety. Consider the very ample convex polytope
$$P_\bfq:=\left\{\left.\mathbf x\in\R^{n+1}\right|\bfq\cdot\mathbf x=r,\;x_i\geq0\;\forall\; i \right\}$$
Notice that $P_\bfq$ is the convex hull in $\R^{n+1}$ of the points $(r/q_0)\bfe_0,\ldots,(r/q_n)\bfe_n$. Let $\Acal$ be the set of integral points of $dP_\bfq$, so that $\Acal$ is the set of $\bfa$ such that $X^\bfa$ has weighted degree $dr$. Notice that $\Acal$ is homogeneous since $\omega\cdot\bfa=1$ for all $\bfa\in\Acal$ where
\begin{equation}\label{omega}
\omega=\left(\frac{q_0}{dr},\ldots,\frac{q_n}{dr}\right).
\end{equation}

The projective toric variety cut out by $I_\Acal$ is the $d$th Veronese embedding of $\proj(\bfq)$. We will refer to $P_\bfq$ as the \emph{standard defining polytope} of $\proj(\bfq)$. 

For proofs of these facts and a nice overview of weighted projective space as a toric variety, one may refer to~\cite{2011arXiv1112.1677R}.




\paragraph*{Multigraded Betti Numbers}

Suppose $M$ is any $S$--module with a $\Z^{n+1}$--multigraded structure. A \emph{multigraded free resolution} of $M$ is a multigraded $S$--complex of free modules
$$F_\bullet:\;\cdots \longrightarrow F_2\stackrel{d_2}\longrightarrow F_1\stackrel{d_1}\longrightarrow F_0\longrightarrow0$$
whose only nonzero homology is $H_0(F_\bullet)=M$. In general, an $S$--complex $C_\bullet$ is said to be \emph{minimal} if the image of $C_i\rightarrow C_{i-1}$ is contained in $\mathfrak mC_{i-1}$, where $\mathfrak m=\langle X_0,\ldots,X_N\rangle$ is the irrelevant ideal of $S$. Up to isomorphism, any finitely generated multigraded $S$--module $M$ has a unique minimal multigraded free resolution, and it is a direct summand of any free resolution of $M$.

Suppose $M$ is finitely generated, and $F_\bullet$ is its minimal multigraded free resolution. Then the \emph{$i$th syzygy} $\Omega_i$ of $M$ is the image of the map $d_i:F_i\rightarrow F_{i-1}$ for $i\geq 1$; the $0$th syzygy $\Omega_0$ is defined to be $M$. For $i\in\N$, $\bfc\in \Z^{n+1}$, let $\beta_{i,\bfc}(M)$ be the number of minimal multidegree--$\bfc$ generators of $\Omega_i$. Then the minimality of $F_\bullet$ implies that
$$F_i\cong\bigoplus_{\bfc\in\Z^{n+1}} S(-\bfc)^{\beta_{i,\bfc}(M)}.$$
Here $S(-\bfc)$ is the multigraded rank--$1$ free $S$--module with the degree shift $S(-\bfc)_{\bfb} = S_{\bfb-\bfc}$. The $\beta_{i,\bfc}(M)$ are called the \emph{multigraded Betti numbers} of $M$, and are invariants of the module. The collection of all such Betti numbers is called the \emph{Betti table} of $M$.

If $\Acal\subset\N^{n+1}$ is a finite homogeneous set, then the $\Z$--graded Betti numbers $\beta_{i,j}(R_\Acal)$ (those corresponding to the usual $\Z$--grading on $S$) can be recovered from the $\Z^{n+1}$--multigraded Betti numbers via
\begin{equation}\label{bettisum}
\beta_{i,j}(R_\Acal)=\sum_{\omega\cdot\bfc=j}\beta_{i,\bfc}(R_\Acal).
\end{equation}

\paragraph*{Squarefree Divisor Complexes}

In~\cite{MR1481087}, Bruns and Herzog define the \emph{squarefree divisor complex} on a finite subset $\Acal\subset\N^{n+1}$ for some $\bfc\in\Z\Acal$ to be the abstract simplicial complex on the set $\Acal$ given by
\begin{equation}\label{bhcomplex}
\Delta_\bfc(\Acal):=\left\{\mathcal S\subseteq\Acal\left|\bfc-\sum_{\bfa\in\mathcal S}\bfa\in\N\Acal\right.\right\}.
\end{equation}

The reduced homology groups of $\Delta_\bfc(\Acal)$ can then be used to find the multigraded Betti numbers of the toric module $R_\Acal$ as follows.

\begin{theorem}[Bruns, Herzog]\label{bhtheorem}
For any finite $\Acal\subset\N^{n+1}$, $\bfc\in\Z\Acal$,
$$\beta_{i,\bfc}(R_\Acal)=\dim\tilde H_{i-1}(\Delta_\bfc(\Acal);\kfield).$$
\end{theorem}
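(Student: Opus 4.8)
The plan is to compute the multigraded Betti numbers homologically, via $\Tor$, using the Koszul resolution of the residue field. First recall the standard fact that for a finitely generated multigraded $S$--module $M$ one has $\beta_{i,\bfc}(M)=\dim_\kfield\Tor_i^S(M,\kfield)_\bfc$: tensoring the minimal multigraded free resolution $F_\bullet$ of $M$ with $\kfield=S/\mfrak$ kills every differential (by minimality the image of each $d_i$ lies in $\mfrak F_{i-1}$), so $\Tor_i^S(M,\kfield)\cong F_i\otimes_S\kfield\cong\bigoplus_\bfc\kfield(-\bfc)^{\beta_{i,\bfc}(M)}$. Hence it suffices to compute $\Tor_i^S(R_\Acal,\kfield)_\bfc$, and we do so using the Koszul complex $K_\bullet$ resolving $\kfield$ over $S$, giving $\Tor_i^S(R_\Acal,\kfield)=H_i(K_\bullet\otimes_S R_\Acal)$.

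Next I would identify the degree--$\bfc$ strand $(K_\bullet\otimes_S R_\Acal)_\bfc$ with the reduced simplicial chain complex of $\Delta_\bfc(\Acal)$. Write $\Acal=\{\bfa_0,\dots,\bfa_N\}$; the Koszul complex has $K_i=\bigwedge^i S^{N+1}$ with basis $e_\sigma:=e_{j_1}\wedge\cdots\wedge e_{j_i}$ indexed by $\sigma=\{j_1<\cdots<j_i\}\subseteq\{0,\dots,N\}$, where $e_j$ has multidegree $\bfa_j$ so that $\mdeg(e_\sigma)=\sum_{j\in\sigma}\bfa_j$. Since $R_\Acal\cong\im\hat\Phi_\Acal=\kfield[\N\Acal]$ is, as a multigraded vector space, one--dimensional (spanned by $Y^\bfb$) in each degree $\bfb\in\N\Acal$ and zero otherwise, the piece $(K_i\otimes_S R_\Acal)_\bfc=\bigoplus_{|\sigma|=i}(R_\Acal)_{\bfc-\mdeg(e_\sigma)}$ has a basis consisting of the elements $Y^{\bfc-\mdeg(e_\sigma)}\otimes e_\sigma$ for exactly those $\sigma$ with $\bfc-\sum_{j\in\sigma}\bfa_j\in\N\Acal$, i.e.\ the $i$--element faces $\sigma\in\Delta_\bfc(\Acal)$ (identifying $\{0,\dots,N\}$ with $\Acal$ via $j\leftrightarrow\bfa_j$). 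Thus in homological degree $i$ this complex has a basis indexed by the $(i-1)$--dimensional faces of $\Delta_\bfc(\Acal)$, with homological degree $0$ corresponding to the empty face (present precisely when $\bfc\in\N\Acal$); this is exactly the range of the augmented, i.e.\ reduced, chain complex $\tilde C_{\bullet-1}(\Delta_\bfc(\Acal);\kfield)$.

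Finally I would check that the Koszul differential restricts, in this basis, to the reduced simplicial boundary map (up to the usual signs): it sends $Y^{\bfc-\mdeg(e_\sigma)}\otimes e_\sigma$ to $\sum_k(-1)^{k-1}\,(X_{j_k}\cdot Y^{\bfc-\mdeg(e_\sigma)})\otimes e_{\sigma\smallsetminus j_k}$, and in $R_\Acal$ we have $X_{j_k}\cdot Y^{\bfc-\mdeg(e_\sigma)}=Y^{\bfc-\mdeg(e_\sigma)+\bfa_{j_k}}=Y^{\bfc-\mdeg(e_{\sigma\smallsetminus j_k})}$, which is precisely the chosen basis element attached to the face $\sigma\smallsetminus j_k$ --- nonzero because $\Delta_\bfc(\Acal)$ is closed under passage to subsets, so $\sigma\smallsetminus j_k$ is again a face. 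Hence the differential has matrix entries in $\{0,\pm1\}$ agreeing with the reduced simplicial boundary operator, yielding an isomorphism of complexes $(K_\bullet\otimes_S R_\Acal)_\bfc\cong\tilde C_{\bullet-1}(\Delta_\bfc(\Acal);\kfield)$. Taking homology gives $\Tor_i^S(R_\Acal,\kfield)_\bfc\cong\tilde H_{i-1}(\Delta_\bfc(\Acal);\kfield)$, and combining with the first paragraph gives the formula (when $\bfc\notin\N\Acal$ both sides vanish, $\Delta_\bfc(\Acal)$ being void). The only delicate point is this last verification --- that multiplication by each $X_{j_k}$ is an isomorphism on the relevant one--dimensional graded pieces and that the induced signs match a fixed orientation convention on $\Delta_\bfc(\Acal)$ --- but since no face can be destroyed by a single such multiplication, this is purely sign bookkeeping and is routine.
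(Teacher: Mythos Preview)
Your argument is correct and is the standard proof of this result: interpret $\beta_{i,\bfc}$ as $\dim_\kfield\Tor_i^S(R_\Acal,\kfield)_\bfc$, compute $\Tor$ via the Koszul resolution of $\kfield$, and identify the degree--$\bfc$ strand of $K_\bullet\otimes_S R_\Acal$ with the augmented simplicial chain complex of $\Delta_\bfc(\Acal)$. The identification of the basis and the verification that the Koszul differential becomes the simplicial boundary are carried out correctly; the sign check is indeed routine once one fixes the ordering $j_1<\cdots<j_i$.

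There is nothing to compare against in the paper itself: the theorem is stated with attribution to Bruns and Herzog and cited to~\cite{MR1481087}, but no proof is given here. Your proof is essentially the one in that reference (and the one that appears, in various guises, throughout the literature on monomial and toric resolutions), so there is no alternative route to contrast it with.
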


Comparing (\ref{definecomp}) and (\ref{bhcomplex}), it is clear that in general $\Delta_\bfc(\Acal)$ is a sub-complex of the pile complex $\pilevar_\bfc(\Acal)$ for $\bfc\in\N\Acal$ since $\N\Acal\subseteq\N^{n+1}$. However, the following set of propositions show that equality holds when $\Acal$ defines a Veronese embedding of some weighted projective space.

\begin{proposition}\label{samecomp}
Suppose $\Acal$ is a finite homogeneous subset of $\N^{n+1}$ spanning $\R^{n+1}$ as a vector space. Then the following are equivalent:
\begin{enumerate}
\item\label{sat} $\N\Acal=\Z\Acal\cap\N^{n+1}$.
\item\label{sat2} $\N\Acal$ is saturated in $\Z\Acal$, and $\Cone(\Acal)=\Cone(\N^{n+1})$.
\item\label{sublat} For some $d\geq1$ and weights vector $\bfq$, $\Acal$ is the intersection of $dP_\bfq$ with a sublattice $L\leq\Z^{n+1}$ containing the vertices of $dP_\bfq$.
\item\label{eq} $\Delta_\bfc(\Acal)=\Gamma_\bfc(\Acal)$ for all $\bfc\in\Z\Acal$.
\end{enumerate}
\end{proposition}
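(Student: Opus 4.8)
The plan is to prove the cyclic chain of implications $(\ref{sat})\Rightarrow(\ref{sat2})\Rightarrow(\ref{sublat})\Rightarrow(\ref{eq})\Rightarrow(\ref{sat})$. The implication $(\ref{sat})\Leftrightarrow(\ref{sat2})$ should be essentially a restatement of definitions: saying $\N\Acal$ is saturated in $\Z\Acal$ means precisely that whenever $m\bfx\in\N\Acal$ for some $\bfx\in\Z\Acal$ and $m\geq 1$ then $\bfx\in\N\Acal$; by a standard argument (e.g. Gordan's lemma type reasoning, or the fact that $\Z\Acal\cap\Cone(\Acal)$ is the saturation of $\N\Acal$ in $\Z\Acal$) this is equivalent to $\N\Acal=\Z\Acal\cap\Cone(\Acal)$, and then $\Cone(\Acal)=\Cone(\N^{n+1})$ closes the gap between $\Z\Acal\cap\Cone(\Acal)$ and $\Z\Acal\cap\N^{n+1}$. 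I would spell out the easy direction and cite the saturation fact.

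For $(\ref{sat2})\Rightarrow(\ref{sublat})$, set $L:=\Z\Acal$, a sublattice of $\Z^{n+1}$. Since $\Acal$ is homogeneous there is $\omega\in\Q^{n+1}$ with $\omega\cdot\bfa=1$ for all $\bfa\in\Acal$; I would argue that after clearing denominators $\omega$ can be taken of the form $(q_0/m,\ldots,q_n/m)$ with $\bfq$ a weights vector, and that $\Acal$ being the full set of lattice points of $L$ lying in the rational polytope $\{\bfx\geq 0,\ \omega\cdot\bfx=1\}$ is exactly the statement that $\Acal = dP_\bfq\cap L$ for the appropriate $d$ (choosing $d$ so that $r/dr$ normalizations match up with $m$). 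The condition $\Cone(\Acal)=\Cone(\N^{n+1})$ guarantees the cone is the full positive orthant, so the polytope's vertices are the coordinate-axis points $(r/q_i)\bfe_i$; these lie in $L$ because the extreme rays of $\Cone(\Acal)$ are spanned by elements of $\Acal$ (a minimal generating element on each ray), and homogeneity forces that generator to be exactly $(r/q_i)\bfe_i$. This is the step I expect to require the most care: matching the abstract $(q_0/m,\dots)$ normalization to the specific $P_\bfq$ and $d$ in the paper's definition, and verifying that the extreme-ray generators of $\Cone(\Acal)\cap L$ really are the vertices of $dP_\bfq$ rather than some sublattice multiple.

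For $(\ref{sublat})\Rightarrow(\ref{eq})$: from the description $\Acal=dP_\bfq\cap L$ one recovers $\Z\Acal\subseteq L$ and $\Cone(\Acal)=\Cone(\N^{n+1})$ (the vertices span the positive orthant), and one needs $\N\Acal=\Z\Acal\cap\N^{n+1}$ again — so in practice $(\ref{sublat})$ and $(\ref{sat})$ are nearly interchangeable, and I would prove $(\ref{sublat})\Rightarrow(\ref{sat})$ directly using that $\Acal$ is a lattice polytope's worth of points with $L$-saturation built in, then deduce $(\ref{eq})$ from $(\ref{sat})$. The actual content linking to pile complexes is: $\Gamma_\bfc(\Acal)=\{\mathcal S\subseteq\Acal : \bfc-\sum_{\bfa\in\mathcal S}\bfa\in\N^{n+1}\}$ while $\Delta_\bfc(\Acal)=\{\mathcal S\subseteq\Acal : \bfc-\sum_{\bfa\in\mathcal S}\bfa\in\N\Acal\}$, and these two sets of faces coincide for every $\bfc\in\Z\Acal$ exactly when $\N\Acal=\Z\Acal\cap\N^{n+1}$, since $\bfc-\sum_{\bfa\in\mathcal S}\bfa$ always lies in $\Z\Acal$ when $\bfc\in\Z\Acal$. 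Finally $(\ref{eq})\Rightarrow(\ref{sat})$: taking $\mathcal S=\emptyset$, the condition $\Delta_\bfc(\Acal)=\Gamma_\bfc(\Acal)$ already forces $\bfc\in\N\Acal\iff\bfc\in\N^{n+1}$ for all $\bfc\in\Z\Acal$, which is precisely $(\ref{sat})$. So that last arrow is immediate.
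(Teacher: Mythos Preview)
Your proposal is correct and follows the same cyclic scheme $(\ref{sat})\Rightarrow(\ref{sat2})\Rightarrow(\ref{sublat})\Rightarrow(\ref{eq})\Rightarrow(\ref{sat})$ as the paper. Two points of comparison are worth recording. First, your argument for $(\ref{eq})\Rightarrow(\ref{sat})$ via the empty face $\mathcal S=\varnothing$ is cleaner than the paper's: the paper instead chooses a \emph{maximal} $\bfb\in(\Z\Acal\cap\N^{n+1})\smallsetminus\N\Acal$, finds some $\bfa'\in\Acal$ with $\bfb+\bfa'\in\N\Acal$, and exhibits the singleton $\{\bfa'\}\in\Gamma_{\bfb+\bfa'}(\Acal)\smallsetminus\Delta_{\bfb+\bfa'}(\Acal)$; your observation that already $\varnothing\in\Gamma_\bfb(\Acal)\smallsetminus\Delta_\bfb(\Acal)$ for any such $\bfb$ makes the maximality step unnecessary. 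Second, where you route $(\ref{sublat})\Rightarrow(\ref{sat})\Rightarrow(\ref{eq})$ and invoke an unspecified ``$L$-saturation built in'' property of the lattice simplex, the paper proves $(\ref{sublat})\Rightarrow(\ref{eq})$ directly by citing the very ampleness of $dP_\bfq$; both arguments ultimately rest on the same normality-type fact about lattice points in dilates of $dP_\bfq$, and both treat that fact as known rather than proving it in place.
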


\begin{proof}
We begin by showing (\ref{sat} $\Rightarrow$ \ref{sat2}). Suppose $\N\Acal=\Z\Acal\cap\N^{n+1}$. If $\bfb\in\Z\Acal$  and $k\bfb\in\N\Acal$ for some $k\geq0$, then $\bfb\in\N^{n+1}$, so $\bfb\in\N\Acal$, proving saturation. Also, since $\Acal$ spans $\R^{n+1}$ by assumption, $\Cone{\Z\Acal}=\R^{n+1}$. Hence,
$$\Cone(\Acal)=\Cone(\N\Acal)=\Cone(\Z\Acal)\cap\Cone(\N^{n+1})=\Cone(\N^{n+1}).$$

Next we show (\ref{sat2} $\Rightarrow$ \ref{sublat}). Assuming $\Cone(\Acal)=\Cone(\N^{n+1})$, we know there exist positive integers $s_0,\ldots,s_n$ such that $s_0\bfe_0,\ldots,s_n\bfe_n\in\Acal$. Let 
$$d=\gcd(s_0,\ldots,s_n),\;r=\lcm\left(\frac{s_0}{d},\ldots,\frac{s_n}{d}\right),\;\bfq=\left(\frac{dr}{s_0},\ldots,\frac{dr}{s_n}\right).$$
Since $\Acal$ is homogeneous, it is contained in the hyperplane containing $s\bfe_0,\ldots,s\bfe_n$; $dP_\bfq$ is the intersection of this hyperplane with the first orthant, so $\Acal\subset dP_\bfq$. Saturation then implies $\Acal=dP_\bfq\cap\Z\Acal$.

To show (\ref{sublat} $\Rightarrow$ \ref{eq}), we need only show that $\pilevar_\bfc(\Acal)\subseteq\Delta_\bfc(\Acal)$ since the opposite inclusion holds by definition. Suppose $\mathcal S\in\pilevar_\bfc(\Acal)$ for some $\bfc\in\Z\Acal$. Then if $\bfb=\bfc-\sum_{\bfa\in\mathcal S}\bfa$, we know $\bfb\in\N^{n+1}$ by definition and $\bfb\in (kdP_\bfq)\cap L$ for some $k\in\Z$. Since $dP_\bfq$ is very ample, this implies $\bfb\in k(dP_\bfq\cap L)$, so $\bfb\in\N\Acal$. Hence $\mathcal S\in\Delta_\bfc(\Acal)$.


Finally, suppose there exists $\bfb\in(\Z\Acal\cap\N^{n+1})\smallsetminus\N\Acal$, and assume $\bfb$ is maximal with this property. Then $\bfb+\bfa'\in\N\Acal$ for some $\bfa'\in\Acal$. But then $\{\bfa'\}\in\pilevar_{\bfb+\bfa'}(\Acal)\smallsetminus\Delta_{\bfb+\bfa'}(\Acal)$. Hence, we have shown the contrapositive of (\ref{eq} $\Rightarrow$ \ref{sat}).
\end{proof}

\begin{proposition}\label{iffprop}
The set $\Acal=dP_\bfq\cap\Z^{n+1}$ satisfies the conditions of Proposition~\ref{samecomp} for any $d\geq1$ and weights vector $\bfq$.

Furthermore, if $\Acal$ satifies the conditions of Proposition~\ref{samecomp}, then $I_\Acal$ defines the quotient of a weighted projective space by a finite group action.
\end{proposition}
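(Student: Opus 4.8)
The plan is to treat the two assertions separately: the first by matching $\Acal=dP_\bfq\cap\Z^{n+1}$ to the equivalent condition (\ref{sublat}) of Proposition~\ref{samecomp}, and the second by realizing $R_\Acal$ as a ring of invariants of a finite abelian group acting on the homogeneous coordinate ring of $\proj(\bfq)$. For the first assertion, I would first check that $\Acal=dP_\bfq\cap\Z^{n+1}$ meets the standing hypotheses of Proposition~\ref{samecomp}: it is finite since $dP_\bfq$ is bounded, it is homogeneous with respect to the $\omega$ of (\ref{omega}), and it spans $\R^{n+1}$ since it contains the vertices $(dr/q_0)\bfe_0,\dots,(dr/q_n)\bfe_n$ of $dP_\bfq$. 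These vertices are integral because $r=\lcm(q_0,\dots,q_n)$ is divisible by each $q_i$, so $dr/q_i\in\Z$. Hence $\Z^{n+1}$ is itself a sublattice of $\Z^{n+1}$ containing all the vertices of $dP_\bfq$, so $\Acal=dP_\bfq\cap\Z^{n+1}$ is exactly of the form required by condition (\ref{sublat}), and Proposition~\ref{samecomp} then supplies all four conditions.

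For the second assertion, suppose $\Acal$ satisfies the conditions of Proposition~\ref{samecomp}, and put $M:=\Z\Acal$. As in the proof of Proposition~\ref{samecomp}, condition (\ref{sat2}) forces each extreme ray $\R_{\ge0}\bfe_i$ of the first orthant to be spanned by some $s_i\bfe_i\in\Acal$; these elements span $\R^{n+1}$, so $M$ is a sublattice of $\Z^{n+1}$ of full rank and hence of finite index, and the weights vector recovered in Proposition~\ref{samecomp} is $\bfq=(dr/s_0,\dots,dr/s_n)$ with $\omega=\tfrac1{dr}\bfq$. Let $G:=\Hom(\Z^{n+1}/M,\kfield^*)$, a finite abelian group; here I would assume $\kfield$ is algebraically closed (or at least contains enough roots of unity) so that the characters of the finite group $\Z^{n+1}/M$ separate its points. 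Condition (\ref{sat}) gives $\N\Acal=\Z\Acal\cap\N^{n+1}=M\cap\N^{n+1}$, so that $R_\Acal=S/I_\Acal\cong\kfield[M\cap\N^{n+1}]$ as graded rings, the grading being the integer-valued functional $\bfa\mapsto\omega\cdot\bfa$ on $M$.

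Next I would let $G$ act on $\kfield[Y_0,\dots,Y_n]=\kfield[\N^{n+1}]$ by $\chi\cdot Y^\bfa=\chi(\bfa+M)\,Y^\bfa$. This is a ring automorphism preserving the weighted grading $\deg Y_i=q_i$, and because it scales each monomial basis vector, an element is $G$-invariant precisely when it is supported on those $Y^\bfa$ with $\chi(\bfa+M)=1$ for all $\chi$, i.e.\ with $\bfa\in M$. Thus $\kfield[\N^{n+1}]^G=\kfield[M\cap\N^{n+1}]\cong R_\Acal$, and since the grading on $R_\Acal$ is $\tfrac1{dr}$ times the weighted grading, it has the same Proj. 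Invoking the standard fact that $\mathrm{Proj}(A^G)=(\mathrm{Proj}\,A)/G$ for a finite group acting on a graded ring, together with $\mathrm{Proj}\,\kfield[Y_0,\dots,Y_n]=\proj(\bfq)$, I conclude $V(I_\Acal)=\mathrm{Proj}\,R_\Acal\cong\proj(\bfq)/G$.

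The polytope bookkeeping — integrality of the vertices, finite index of $M$, saturation — is routine and parallels the proof of Proposition~\ref{samecomp}. The step that requires genuine care is the identification $R_\Acal\cong\kfield[Y_0,\dots,Y_n]^G$ together with the compatibility of gradings: this is where the hypothesis on $\kfield$ enters (characters must separate the points of $\Z^{n+1}/M$), and where one should verify that the diagonal $G$-action on $\kfield[Y_0,\dots,Y_n]$ genuinely descends to $\proj(\bfq)$ and that forming invariants commutes with $\mathrm{Proj}$.
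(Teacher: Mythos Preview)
Your treatment of the first assertion matches the paper's: both observe that $\Acal=dP_\bfq\cap\Z^{n+1}$ is an instance of condition~(\ref{sublat}) with $L=\Z^{n+1}$, and you simply spell out the verification of the standing hypotheses (finite, homogeneous, spanning) that the paper leaves implicit.

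For the second assertion the routes diverge. The paper does not construct anything: it notes that $dP_\bfq$ is an $n$-simplex, passes to the normal fan $\Sigma_P$ with respect to the lattice $L=\Z\Acal$, and invokes Theorem~2.11 of Batyrev--Cox~\cite{MR1290195}, which classifies complete toric varieties whose fan is simplicial with $n+1$ rays as finite quotients of weighted projective spaces. Your argument is instead a direct invariant-theory construction: you identify the finite abelian group $G=\Hom(\Z^{n+1}/\Z\Acal,\kfield^*)$, let it act diagonally on $\kfield[Y_0,\dots,Y_n]$, and show $R_\Acal\cong\kfield[Y]^G$ using condition~(\ref{sat}) to get $\N\Acal=\Z\Acal\cap\N^{n+1}$. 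This is correct and more explicit---it actually names the group and the action, which the Batyrev--Cox citation does not---but it buys that transparency at the cost of a hypothesis on $\kfield$ (enough roots of unity so that characters of $\Z^{n+1}/\Z\Acal$ separate points) and a couple of standard facts left to verify (that $\mathrm{Proj}(A^G)\cong(\mathrm{Proj}\,A)/G$ for finite $G$, and that rescaling the grading by $dr$ does not change $\mathrm{Proj}$). The paper's approach is a one-line citation; yours is self-contained and constructive.
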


\begin{proof}
The first assertion follows immediately from the third equivalent condition in Proposition~\ref{samecomp}.

To prove the second assertion, we again use the third condition, and notice that $dP_\bfq$ is an $n$-simplex, so taking the fan $\Sigma_P$ (with respect to the appropriate lattice), we can apply Theorem~2.11 of~\cite{MR1290195}.
\end{proof}

We remark that the applications of Theorem~\ref{intromaintheorem} can be extended to toric embeddings of finite quotients of weighted projective space arising as in the proof of Proposition~\ref{iffprop}. We do not attempt to characterize such embeddings, but we offer the following example to show that the generalization is nontrivial.

\begin{example}
Let
$$\Acal=\{(4,0,0),(2,2,0),(1,1,1),(0,4,0),(0,0,2)\}.$$
Notice that $\Acal$ satisfies the conditions of Proposition~\ref{samecomp}, but it is the intersection of a convex set with an index four sublattice $L\leq\Z^{n+1}$. The ray generators of the fan $\Sigma_P$ in the dual lattice $L^\vee$ generate an index two sublattice of $L^\vee$, and so the projective toric variety cut out by $I_\Acal$ is a quotient of $\proj(1,1,2)$ by $\Z/2\Z$.
\end{example}

\section{Proof of Theorem~\ref{intromaintheorem}}\label{combsec}

In this section we will prove Theorem~\ref{intromaintheorem}, independent of the applications in Section~\ref{appsec}.

As in the introduction, suppose $\mathcal A$ is a finite subset of $\N^{n+1}$ and $\mathbf c\in\Z^{n+1}$. Define $\Gamma_\bfc(\Acal)$ as in (\ref{definecomp}), and $\bft$ and $\check\bfc$ as in (\ref{tcheck}).

Notice first that $\mathcal A\subset\N^{n+1}$ guarantees that $\pilevar_{\mathbf c}(\mathcal A)$ is indeed a complex. Indeed, if $\sum_{\bfa\in \mathcal S}\bfa\leq\bfc$, then for any $\mathcal T\subseteq\mathcal S$, $\sum_{\bfa\in\mathcal T}\bfa\leq\bfc$. 



From now on, we assume all homology groups are taken over the base field $\mathbbm k$.


\begin{lemma}\label{starlemma}
Assume $\mathcal A$ satisfies the conditions of Theorem~\ref{intromaintheorem}. Suppose $\mathbf b=(b_0,\ldots,b_n)$ is a point in $\Z^{n+1}$ such that $b_j=t_j$ for some $j$ (where $\bft=(t_0,\ldots,t_n)$). Then $\tilde H_i(\pilevar_{\mathbf b}(\mathcal A))=0$ for all $i$.
\end{lemma}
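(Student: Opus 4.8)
The plan is to show that $\pilevar_{\mathbf b}(\mathcal A)$ is a cone, hence contractible, by exhibiting a single vertex $\bfa^*\in\mathcal A$ that belongs to every face. A simplicial complex which is a cone over a vertex has trivial reduced homology, so this immediately gives the conclusion.

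To find such a cone point, I would use the hypothesis that $b_j=t_j=\sum_{\bfa\in\mathcal A}a_j$ for some coordinate $j$. The idea is that the $j$-th coordinate constraint $\sum_{\bfa\in\mathcal S}a_j\le b_j=t_j$ is almost saturated: since each $a_j\ge 0$, a face $\mathcal S$ fails to be all of $\mathcal A$ in a way controlled by the $j$-th coordinate only if some $\bfa\in\mathcal A\smallsetminus\mathcal S$ has $a_j>0$. So the natural candidate for the cone point is a vertex $\bfa^*\in\mathcal A$ with $a^*_j=0$ — such a vertex contributes nothing to the $j$-th coordinate sum, so adjoining it to any face $\mathcal S\in\pilevar_{\mathbf b}(\mathcal A)$ cannot violate the $j$-th inequality, and it cannot violate any other inequality $i\ne j$ either, since $\sum_{\bfa\in\mathcal S}a_i\le b_i$ already and adding a nonnegative amount... wait, that's backwards. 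Let me reconsider: adding $\bfa^*$ to $\mathcal S$ increases the sum in coordinates $i\ne j$, which could violate $b_i$. So the cone point must be chosen more carefully.

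The correct approach: I claim that for \emph{every} face $\mathcal S\in\pilevar_{\mathbf b}(\mathcal A)$, we in fact have $\mathcal A\smallsetminus\mathcal S$ consisting of vertices with $a_j>0$, and moreover the total $j$-deficit $t_j-\sum_{\bfa\in\mathcal S}a_j\ge 0$ forces $\mathcal A\smallsetminus\mathcal S\ne\emptyset$ unless $\mathcal S=\mathcal A$; but that only shows $\pilevar_{\mathbf b}(\mathcal A)$ is a proper subcomplex, not that it's a cone. The key additional input must be the geometric hypothesis that $\Cone(\mathcal A)\supseteq\N^{n+1}$, equivalently (by Proposition~\ref{samecomp}) that $\mathcal A$ contains positive multiples $s_i\bfe_i$ of each standard basis vector. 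In particular $\mathcal A$ contains a vertex $s_j\bfe_j$ lying on the $j$-th axis, and a vertex $s_i\bfe_i$ on each other axis. The right cone point should be $\bfa^*=s_i\bfe_i$ for some $i\ne j$: its only nonzero coordinate is the $i$-th, so adjoining it to $\mathcal S$ only threatens the $i$-th inequality. So I would instead prove: for any $\mathcal S\in\pilevar_{\mathbf b}(\mathcal A)$ and the chosen axis vertex $\bfa^*=s_i\bfe_i$ (say $i=0$ if $j\ne 0$, else $i=1$), either $\bfa^*\in\mathcal S$ already, or $\mathcal S\cup\{\bfa^*\}\in\pilevar_{\mathbf b}(\mathcal A)$ — and the latter reduces to checking $s_i+\sum_{\bfa\in\mathcal S}a_i\le b_i$. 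This inequality is where the hypothesis $b_j=t_j$ must be leveraged, presumably via an averaging or summation argument comparing the constraint in coordinate $j$ (which is tight: the deficit is zero iff $\mathcal S=\mathcal A$) against the slack in coordinate $i$.

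The main obstacle I anticipate is exactly this last inequality: showing that saturation in the $j$-th coordinate propagates to give enough slack in the $i$-th coordinate to safely add an axis vertex. I expect one needs the homogeneity of $\mathcal A$ (all points lie on a common hyperplane $\omega\cdot\bfa=1$) together with $b_j=t_j$: summing the defining relations, if $\mathcal S$ has small $j$-deficit then it contains "most" of $\mathcal A$ by weight, and since the axis vertex $s_i\bfe_i$ has a definite weight, there is room. Possibly the cleanest route is: among all faces of $\pilevar_{\mathbf b}(\mathcal A)$, take a maximal one $\mathcal S$; show by the axis-vertex trick that $s_i\bfe_i\in\mathcal S$ for all $i\ne j$; then show every maximal face contains a \emph{common} vertex (e.g.\ the axis vertex $s_i\bfe_i$ for a fixed $i\ne j$), which is the cone point. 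If a single cone point genuinely does not exist, the fallback is to show $\pilevar_{\mathbf b}(\mathcal A)$ is collapsible or to build an explicit contracting homotopy by a discrete-Morse-style matching that pairs each face with its union with $s_i\bfe_i$ — but I would try the cone argument first, as it is by far the shortest if it works.
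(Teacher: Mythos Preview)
Your overall strategy---show $\pilevar_{\mathbf b}(\mathcal A)$ is a cone over a single axis vertex---is exactly right, and the observation that $\Cone(\mathcal A)\supseteq\N^{n+1}$ forces $\mathcal A$ to contain a multiple $s_k\bfe_k$ of each standard basis vector is the correct ingredient. But you chose the wrong axis. You try $\bfa^*=s_i\bfe_i$ with $i\ne j$, and then struggle to bound the $i$-th coordinate sum; indeed that inequality need not hold, since the hypothesis says nothing about $b_i$ for $i\ne j$.

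The fix is to take the axis vertex in the \emph{same} direction as the saturated coordinate: set $\bfa^*=s_j\bfe_j$. Then for any face $\mathcal S$ and any $i\ne j$, the $i$-th coordinate of $\bfa^*$ is zero, so
\[
\sum_{\bfa\in\mathcal S\cup\{\bfa^*\}}a_i=\sum_{\bfa\in\mathcal S}a_i\le b_i,
\]
while in coordinate $j$ one has $b_j=t_j=\sum_{\bfa\in\mathcal A}a_j$, so \emph{every} subset of $\mathcal A$ (in particular $\mathcal S\cup\{\bfa^*\}$) has $j$-th coordinate sum at most $b_j$. Hence $\mathcal S\cup\{\bfa^*\}\in\pilevar_{\mathbf b}(\mathcal A)$, the complex is a cone on $\bfa^*$, and all reduced homology vanishes. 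No homogeneity, averaging, or discrete Morse theory is needed. (Note also that your second attempt $s_i\bfe_i$ with $i\ne j$ is a special case of your first rejected attempt, since it too has $j$-th coordinate zero; the genuinely new choice is the one with $j$-th coordinate nonzero.)
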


This Lemma can be deduced from Proposition~3.1 of~\cite{MR1481087}, but we provide a self-contained proof.

\begin{proof}
Without loss of generality, assume $j=0$ so that $\mathbf b = (t_0,b_1,\ldots,b_n)$. In order to generate a cone containing all of $\N^{n+1}$, $\mathcal A$ must contain an element $\bfa'=d\bfe_0$ for some $d>0$.

We claim that $\mathbf a'$ is an element of any maximal face of $\pilevar_{\mathbf b}$. Indeed, if $\mathcal S$ is any face of $\pilevar_{\mathbf b}$,
$$\sum_{\mathbf a\in\mathcal S\cup\{\mathbf a'\}}a_0 \leq t_0;\qquad \sum_{\mathbf a\in\mathcal S\cup\{\mathbf a'\}}a_i = \sum_{\mathbf a\in\mathcal S}a_i\leq b_i,\quad i=1,\ldots,n.$$
That is,
$$\mathcal S\in\pilevar_{\mathbf b}\Longrightarrow\mathcal S\cup\{\mathbf a'\}\in\pilevar_{\mathbf b}(\Acal).$$
Because of this property, $\pilevar_{\mathbf b}(\Acal)$ is star-shaped with center $\mathbf a'$, and hence contractible.
\end{proof}

We introduce one more bit of background before the proof. For any abstract simplicial complex $\Delta$ on a set $\mathcal B$, we let $\Delta^*$ be its \emph{Alexander dual}---the simplicial complex of complements of non-faces of $\Delta$. If $M$ is the number of elements in $\mathcal B$, there is an isomorphism
$$\tilde{H}_i(\Delta^*)\cong\tilde{H}^{M-i-3}(\Delta).$$
See~\cite{MR2556456} for a proof.

\begin{proof}[Proof of Theorem~\ref{intromaintheorem}]
Let $\mathcal A$ and $\mathbf c = (c_0,\ldots,c_n)$ be as in the theorem. Also, define $\mathbf t=(t_0,\ldots,t_n)$ and $\mathbf{\check c}$ as in (\ref{tcheck}).

We can describe the faces $\mathcal S\in\pilevar_{\mathbf c}^*$ explicitly as follows.
\begin{eqnarray*}
\mathcal S\in\pilevar_{\mathbf c}^*&\Leftrightarrow&\mathcal A\smallsetminus\mathcal S\notin\pilevar_{\mathbf c}\\
&\Leftrightarrow&\sum_{\mathbf a\notin \mathcal S} a_i> c_i\quad\text{for some $i$}\\
&\Leftrightarrow&\sum_{\mathbf a\in \mathcal S} a_i< t_i-c_i\quad\text{for some $i$}\\
&\Leftrightarrow&\sum_{\mathbf a\in \mathcal S} a_i\leq t-c_i-1\quad\text{for some $i$}\\
&\Leftrightarrow&\mathcal S\in\pilevar_{(t_0-c_0-1,t_1,\ldots,t_n)}\cup\cdots\cup\pilevar_{(t_0,\ldots,t_{n-1},t_n-c_n-1)}.
\end{eqnarray*}

Now let $\mathcal R=\{0,\ldots,n\}$ and for any non-empty $\sigma\subseteq\mathcal R$, let $\mathbf c^\sigma\in\N^{n+1}$ be defined by
$$c_j^\sigma=\begin{cases}t_j-c_j-1&j\in\sigma\\t_j&\text{else}\end{cases}$$
Then let $\pilevar^j=\pilevar_{\mathbf c^{\{j\}}}$ for $j\in\mathcal R$, and $\pilevar^\sigma=\pilevar_{\mathbf c^\sigma}$. Notice that $\pilevar^\sigma=\bigcap_{j\in\sigma}\pilevar^j$ and $\pilevar^{\mathcal R}=\pilevar_{\mathbf{\check c}}$. We also observe that $\pilevar_{\mathbf c}^*=\bigcup_{j=0}^n\pilevar^j$. We let $\pilevar_{\mathbf c}^*[i]$ be the set of $i$--simplices of $\pilevar_{\mathbf c}^*$.

We will use a generalized Mayer-Vietoris sequence to show that
$$\tilde H_{i+n}(\pilevar_{\mathbf c}^*)=\tilde H_{i}\left(\pilevar_{\mathbf{\check c}}\right).$$

Define a double complex $C_{\bullet,\bullet}$ of reduced simplicial chain groups with 
$$C_{p,q}=\bigoplus_{\sigma\in\pilevar_{\mathbf c}^*[p]}\tilde C_q(\pilevar^\sigma)$$
as shown.
$$\begin{array}{ccccccccccc}
&&\vdots&&\vdots&&&&\vdots&\\
&&\downarrow&&\downarrow&&&&\downarrow&\\
0&\rightarrow&\tilde C_1\left(\pilevar_{\mathbf{\check c}}\right)&\stackrel\delta\rightarrow&\bigoplus_{j=0}^n\tilde C_1(\pilevar^{\mathcal R\smallsetminus j}) &\rightarrow&\cdots&\rightarrow&\bigoplus_{j=0}^n\tilde C_1(\pilevar^j)&\rightarrow&0\\
&&\partial\downarrow&&\partial\downarrow&&&&\partial\downarrow\\
0&\rightarrow&\tilde C_0\left(\pilevar_{\mathbf{\check c}}\right)&\stackrel\delta\rightarrow&\bigoplus_{j=0}^n\tilde C_0(\pilevar^{\mathcal R\smallsetminus j}) &\rightarrow&\cdots&\rightarrow&\bigoplus_{j=0}^n\tilde C_0(\pilevar^j)&\rightarrow&0\\
&&\epsilon\downarrow&&\epsilon\downarrow&&&&\epsilon\downarrow\\
0&\rightarrow&\tilde C_{-1}(\pilevar_{\mathbf{\check c}})&\stackrel\delta\rightarrow&\bigoplus_{j=0}^n\tilde C_{-1}(\pilevar^{\mathcal R\smallsetminus j})&\rightarrow&\cdots&\rightarrow&\bigoplus_{j=0}^n\tilde C_{-1}(\pilevar^j)&
\rightarrow&0\\
&&\downarrow&&\downarrow&&&&\downarrow\\
&&0&&0&&&&0&
\end{array}$$

Each of the columns is the direct sum of augmented simplicial chain complexes with the usual boundary operator $\partial$. The row differentials $\delta$ are given by alternating inclusions.

As shown in Section~VII.4 of~\cite{MR672956}, the spectral sequence starting with horizontal homology degenerates at the $E^2$ page to show that we have
$$H_q(\Tot(C_{\bullet,\bullet}))=\tilde H_q(\pilevar_{\mathbf c}^*).$$

Now we take the spectral sequence starting with vertical homology, and we get
$$E^1_{p,q}=\bigoplus_{\sigma\in \pilevar_{\mathbf c}^*[p]}\tilde H_q(\pilevar^\sigma).$$

Now, if $\sigma$ is a non-empty proper subset of $\mathcal R$, then $c^\sigma_j=t_j$ for some $j$. Hence by Lemma~\ref{starlemma}, $\pilevar^\sigma$ has trivial homologies for such $\sigma$. 
Therefore, for $q\geq0$, we really have
$$E^1_{p,q}=\begin{cases}\tilde H_q\left(\pilevar_{\mathbf{\check c}}\right)&p=n\\0&\text{else} \end{cases}$$

Therefore the spectral sequence degenerates on the first page, and we have $\tilde H_{q+n}(\pilevar_{\mathbf c}^*)=\tilde H_{q}\left(\pilevar_{\mathbf{\check c}}\right)$ as claimed. 


Putting this all together, we have
\begin{IEEEeqnarray*}{+rCll+x*}
\tilde H_{i-1}(\pilevar_{\mathbf c})&\cong&\tilde H^{N-i-1}(\pilevar_{\mathbf c}^*)&\text{(Alexander duality)}\\
&\cong&\tilde H_{N-i-1}(\pilevar^*_{\mathbf c})^*&\text{(universal coefficient theorem)}\\
&\cong&\tilde H_{N-n-i-1}\left(\pilevar_{\mathbf{\check c}}\right)^*&\text{(Mayer-Vietoris)}&\qedhere
\end{IEEEeqnarray*}
\end{proof}

\section{Application to Betti Numbers}\label{appsec}


We begin by putting together the results accumulated thus far.

\begin{theorem}\label{dualformula}
Suppose $\Acal\subset\N^{n+1}$ is a homogeneous finite set satisfying the conditions of Proposition~\ref{samecomp}. %
Then for any $\bfc\in\Z\Acal$, we have the following formula for the Betti numbers of $R_\Acal$:
$$\beta_{i,\bfc}(R_\Acal)=\dim\tilde H_{N-n-i-1}(\Gamma_{\check\bfc}(\Acal)).$$
In particular, this formula holds when $R_\Acal$ is a Veronese module of a weighted projective space.
\end{theorem}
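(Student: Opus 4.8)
The plan is to simply assemble Theorem~\ref{dualformula} from pieces that are all already in hand, so the ``proof'' is essentially a chain of citations. The target statement has two parts: the formula $\beta_{i,\bfc}(R_\Acal)=\dim\tilde H_{N-n-i-1}(\Gamma_{\check\bfc}(\Acal))$ for $\bfc\in\Z\Acal$, and the remark that this covers the Veronese case.

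First I would establish the formula for $\bfc\in\N\Acal$. Start from the Bruns--Herzog formula (Theorem~\ref{bhtheorem}), $\beta_{i,\bfc}(R_\Acal)=\dim\tilde H_{i-1}(\Delta_\bfc(\Acal))$. Since $\Acal$ satisfies the conditions of Proposition~\ref{samecomp}, condition~(\ref{eq}) of that proposition gives $\Delta_\bfc(\Acal)=\Gamma_\bfc(\Acal)$ for all $\bfc\in\Z\Acal$, so $\beta_{i,\bfc}(R_\Acal)=\dim\tilde H_{i-1}(\Gamma_\bfc(\Acal))$. Now apply Theorem~\ref{intromaintheorem}: I must check its hypotheses hold, namely that $\Acal$ has $N+1$ elements (true by our labeling $\Acal=\{\bfa_0,\dots,\bfa_N\}$) and that $\Cone(\Acal)\supseteq\N^{n+1}$ --- this is exactly the condition $\Cone(\Acal)=\Cone(\N^{n+1})$ furnished by condition~(\ref{sat2}) of Proposition~\ref{samecomp}. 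The theorem then yields $\tilde H_{i-1}(\Gamma_\bfc(\Acal))\cong\tilde H_{N-n-i-1}(\Gamma_{\check\bfc}(\Acal))^*$, and since these are finite-dimensional $\kfield$-vector spaces, taking dimensions gives $\beta_{i,\bfc}(R_\Acal)=\dim\tilde H_{N-n-i-1}(\Gamma_{\check\bfc}(\Acal))$.

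The one genuine subtlety, and the step I expect to be the main obstacle, is extending from $\bfc\in\N\Acal$ to all of $\bfc\in\Z\Acal$. Bruns--Herzog's Theorem~\ref{bhtheorem} as stated here already allows $\bfc\in\Z\Acal$, and in fact for $\bfc\in\Z\Acal\smallsetminus\N\Acal$ one has $\beta_{i,\bfc}(R_\Acal)=0$ (the multidegrees appearing in a free resolution of $R_\Acal$ lie in $\N\Acal$). So I would argue: if $\bfc\in\Z\Acal$ but $\bfc\notin\N\Acal$, then $\bfc$ has a negative coordinate or otherwise fails $\bfc\leq\bfc$ is irrelevant --- more precisely, if some $c_i<0$ then $\Gamma_\bfc(\Acal)=\{\varnothing\}$ only if even the empty sum fails, i.e.\ $\Gamma_\bfc(\Acal)=\varnothing$ (the void complex) when $c_i<0$, which has all reduced homology zero, matching $\beta_{i,\bfc}=0$; and by the saturation/symmetry of the construction one checks $\Gamma_{\check\bfc}(\Acal)$ likewise contributes nothing. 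Alternatively, and more cleanly, I would note that Theorem~\ref{intromaintheorem} and the identity $\Delta_\bfc=\Gamma_\bfc$ are both asserted for \emph{all} $\bfc\in\Z^{n+1}$ (resp.\ $\bfc\in\Z\Acal$), with no $\N\Acal$ restriction, so the chain of isomorphisms above is valid verbatim for every $\bfc\in\Z\Acal$ and no separate argument is needed; the $\N\Acal$ hypothesis only entered Bruns--Herzog's original statement, and the version quoted here as Theorem~\ref{bhtheorem} is already stated for $\bfc\in\Z\Acal$.

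Finally, for the ``in particular'' clause: Proposition~\ref{iffprop} states that $\Acal=dP_\bfq\cap\Z^{n+1}$ satisfies the conditions of Proposition~\ref{samecomp} for every $d\geq1$ and every weights vector $\bfq$, and by the discussion in Section~\ref{bgsec} the toric ideal $I_\Acal$ for this $\Acal$ cuts out precisely the $d$th Veronese embedding of $\proj(\bfq)$, so $R_\Acal=R_{\bfq,d}$ is the corresponding Veronese module. Hence the formula applies, completing the proof.
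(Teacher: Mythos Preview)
Your proposal is correct and follows exactly the same route as the paper: chain together Theorem~\ref{bhtheorem}, the equality $\Delta_\bfc(\Acal)=\Gamma_\bfc(\Acal)$ from Proposition~\ref{samecomp}, Theorem~\ref{intromaintheorem}, and then invoke Proposition~\ref{iffprop} for the Veronese case. Your digression on $\N\Acal$ versus $\Z\Acal$ is unnecessary (as you yourself conclude), since Theorem~\ref{bhtheorem} is already stated for all $\bfc\in\Z\Acal$; the paper simply writes the three-step chain of equalities without comment.
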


\begin{proof}
Putting together Theorem~\ref{intromaintheorem}, Bruns and Herzog's Theorem~\ref{bhtheorem}, and Proposition~\ref{samecomp}, for any $\bfc\in\Z\Acal$ we have
$$\beta_{i,\bfc}=\dim\tilde H_{i-1}(\Delta_\bfc(\Acal))=\dim\tilde H_{i-1}(\Gamma_\bfc(\Acal))=\dim\tilde H_{N-n-i-1}(\Gamma_{\check\bfc}(\Acal)).$$
The final assertion holds because of Proposition~\ref{iffprop}.
\end{proof}

\begin{example}
Let $\bfq=(1,1,1)$ and $d=2$. In this case, $I_{(1,1,1),2}$ defines the second Veronese embedding of $\proj^2$, known as the \emph{Veronese surface}.

We have
$$\Acal=\{(2,0,0),(1,1,0),(1,0,1),(0,2,0),(0,1,1),(0,0,2)\}.$$
Here $N=5$, and $\bft=(4,4,4)$. Letting $\bfc=(2,2,2)$, we have $\check\bfc=(1,1,1)$. Notice that $\bfc$ is in $\Z\Acal$ but $\check\bfc$ is not. We record the maximal faces of the two pile complexes below.
$$\begin{array}{ccc}
\underline{\pilevar_{(2,2,2)}(\Acal)}&\qquad&\underline{\pilevar_{(1,1,1)}(\Acal)}\\
\{(2,0,0),(0,1,1)\}&&\{(1,1,0)\}\\
\{(2,0,0),(0,2,0),(0,0,2)\}&&\{(1,0,1)\}\\
\{(1,1,0),(1,0,1),(0,1,1)\}&&\{(0,0,1)\}\\
\{(1,1,0),(0,0,2)\},\\
\{(1,0,1),(0,2,0)\}
\end{array}$$

Below we illustrate a geometric realization of each pile complex.

{\center
\begin{tikzpicture}[scale = 1]
\draw (120:2) node {(2,0,0)};
\draw (60:2) node {(0,2,0)};
\draw (350:2) node {(0,0,2)};
\draw (300:2) node {(1,1,0)};
\draw (230:2) node {(1,0,1)};
\draw (180:2.2) node {(0,1,1)};
\filldraw[fill = gray!50, thick] (0:1.5) -- (60:1.5) -- (120:1.5) -- (0:1.5);
\filldraw[fill = gray!50, thick] (180:1.5) -- (240:1.5) -- (300:1.5) -- (180:1.5);
\draw[thick] (120:1.5) -- (180:1.5);
\draw[thick] (0:1.5) -- (300:1.5);
\fill (0:1.5) circle (.08);
\fill (60:1.5) circle (.08);
\fill (120:1.5) circle (.08);
\fill (180:1.5) circle (.08);
\fill (240:1.5) circle (.08);
\fill (300:1.5) circle (.08);
\draw[thick] (240:1.5) arc (210:450:1.732);
\draw (90:2.5) node {$\underline{\pilevar_{(2,2,2)}(\Acal)}$};

\begin{scope}[shift = {(8,0)}]
\draw (120:2) node {(2,0,0)};
\draw (60:2) node {(0,2,0)};
\draw (0:2.2) node {(0,0,2)};
\draw (300:2) node {(1,1,0)};
\draw (240:2) node {(1,0,1)};
\draw (180:2.2) node {(0,1,1)};
\fill (180:1.5) circle (.08);
\fill (240:1.5) circle (.08);
\fill (300:1.5) circle (.08);
\draw (90:2.5) node {$\underline{\pilevar_{(1,1,1)}(\Acal)}$};
\end{scope}
\end{tikzpicture}

}

As predicted by Theorem~\ref{intromaintheorem},
$$\dim\tilde H_1(\pilevar_{(2,2,2)}(\Acal))=\dim\tilde H_0(\pilevar_{(1,1,1)})=2.$$
Hence, 
$$\beta_{2,(2,2,2)}(R_{(1,1,1),2})=\dim\tilde H_0(\pilevar_{(1,1,1)}(\Acal))$$
as predicted by Theorem~\ref{dualformula}.
\end{example}

We think of Theorem~\ref{dualformula} as providing a dual formula to the one offered by Bruns and Herzog's Theorem~\ref{bhtheorem} in the case of Veronese embeddings of weighted projective space. For the purposes of computing Betti numbers, the formulas complement each other well. Bruns and Herzog's formula can more easily calculate \emph{lower} Betti numbers ($\beta_{i,\bfc}$ for small $i$), whereas Theorem~\ref{dualformula} can more easily calculate \emph{higher} Betti numbers ($\beta_{i,\bfc}$ for large $i$).

In this spirit, some of our applications of Theorem~\ref{dualformula} use properties of the lowest reduced homology groups to prove facts about the highest syzygy. We remind the reader of the subtlties of working with reduced homology.
$$\begin{array}{ll}
\tilde H_{-1}(\{\varnothing\};\mathbbm k)=\mathbbm k,& \tilde H_i(\varnothing;\mathbbm k)=0\text{ for all $i$},\\
\tilde H_i(\{\varnothing\};\mathbbm k)=0\text{ for $i\neq-1$},\quad&\dim\tilde H_0(X;\kfield)=\#\{\text{components of $X$}\}-1.
\end{array}$$
By definition, if $\bfc\ngeq\mathbf0$, then $\pilevar_\bfc(\Acal)=\varnothing$, and if $\bfc\geq\mathbf0$ but $\bfc\ngeq\bfa$ for all $\bfa\in\Acal$, then $\pilevar_\bfc(\Acal)=\{\varnothing\}$. Hence we have
\begin{equation}\label{reducedfact}
\tilde H_{-1}(\pilevar_\bfc(\Acal))=\begin{cases}\kfield&\bfc\in\N^{n+1},\text{ and }\bfc\ngeq\bfa\;\forall\;\bfa\in\Acal\\0&\text{else}\end{cases}.
\end{equation}


The results about weighted projective space that follow come as immediate applications of Theorem~\ref{dualformula}.


The projective dimension of an $S$--module is the length of its minimal free resolution---that is, the largest $i$ such that $\beta_{i,\bfc}\neq0$ for some $\bfc$.
\begin{proposition}\label{pdim}
The projective dimension of the $R_{\bfq,d}$ is $N-n$. In particular, $R_{\bfq,d}$ is Cohen-Macaulay.
\end{proposition}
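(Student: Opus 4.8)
The plan is to sandwich $\operatorname{pd}_S(R_{\bfq,d})$ between $N-n$ and $N-n$: the upper bound will come straight from the dual formula of Theorem~\ref{dualformula}, and the lower bound---together with Cohen--Macaulayness---will fall out of the Auslander--Buchsbaum formula once we record that $\dim R_{\bfq,d}=n+1$.

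For the upper bound, first note that $\Acal=dP_\bfq\cap\Z^{n+1}$ meets the hypotheses of Theorem~\ref{dualformula} by Proposition~\ref{iffprop}, so $\beta_{i,\bfc}(R_{\bfq,d})=\dim\tilde H_{N-n-i-1}(\pilevar_{\check\bfc}(\Acal))$ for all $\bfc\in\Z\Acal$; and since the minimal multigraded free resolution of $R_{\bfq,d}$ is supported in multidegrees lying in $\N\Acal\subseteq\Z\Acal$, we also have $\beta_{i,\bfc}=0$ for $\bfc\notin\Z\Acal$. Now take any $i>N-n$. Then the homological index $N-n-i-1$ is at most $-2$, and reduced simplicial homology vanishes in degrees below $-1$ for every abstract simplicial complex---this covers the two degenerate possibilities $\pilevar_{\check\bfc}(\Acal)=\varnothing$ and $\pilevar_{\check\bfc}(\Acal)=\{\varnothing\}$ that can occur here, per the conventions recalled around~(\ref{reducedfact}). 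Hence $\beta_{i,\bfc}(R_{\bfq,d})=0$ for every $\bfc$, i.e.\ $\operatorname{pd}_S(R_{\bfq,d})\le N-n$.

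Then I would feed this into the graded Auslander--Buchsbaum formula over $S=\kfield[X_0,\dots,X_N]$: $\operatorname{pd}_S(R_{\bfq,d})+\operatorname{depth}R_{\bfq,d}=N+1$, so $\operatorname{depth}R_{\bfq,d}\ge n+1$. On the other hand $R_{\bfq,d}$ is the homogeneous coordinate ring of the $d$th Veronese embedding of the $n$-dimensional variety $\proj(\bfq)$---equivalently the semigroup ring of $\N\Acal$, whose group has rank $n+1$ because $\Acal$ spans $\R^{n+1}$---so $\dim R_{\bfq,d}=n+1$. Since depth never exceeds Krull dimension, this forces $\operatorname{depth}R_{\bfq,d}=n+1=\dim R_{\bfq,d}$, so $R_{\bfq,d}$ is Cohen--Macaulay, and Auslander--Buchsbaum then pins down $\operatorname{pd}_S(R_{\bfq,d})=N-n$ exactly. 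The only step demanding care is the vanishing of reduced homology in degrees $\le-2$ for the possibly-degenerate complexes $\pilevar_{\check\bfc}(\Acal)$, which is immediate once one is careful with the reduced-chain conventions already in force; everything else is a standard invocation of Auslander--Buchsbaum. (If one wishes to keep the lower bound inside the combinatorial framework, one can instead exhibit a single $\bfc\in\N\Acal$ with $\check\bfc\in\N^{n+1}$ dominating no element of $\Acal$ and $\check\bfc+\bfone\in\Z\Acal$, so that $\beta_{N-n,\bfc}(R_{\bfq,d})=\dim\tilde H_{-1}(\pilevar_{\check\bfc}(\Acal))=1$ by~(\ref{reducedfact}); but the Auslander--Buchsbaum route is shorter and also yields Cohen--Macaulayness in one stroke.)
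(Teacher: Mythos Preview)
Your proof is correct. The upper bound $\operatorname{pd}_S(R_{\bfq,d})\le N-n$ is obtained exactly as in the paper, via Theorem~\ref{dualformula} and the vanishing of reduced homology in degrees $\le -2$.

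The difference lies in the lower bound. The paper stays inside the combinatorial framework: it sets $\mathcal B=\{\bfb\in\N^{n+1}\mid\check\bfb\in\N\Acal\}$, picks $\bfb\in\mathcal B$ with $\bfq\cdot\bfb$ minimal, and argues that minimality forces $\bfb\ngeq\bfa$ for every $\bfa\in\Acal$, so~(\ref{reducedfact}) gives $\beta_{N-n,\check\bfb}=\dim\tilde H_{-1}(\pilevar_\bfb(\Acal))=1$; Cohen--Macaulayness is then read off from $\operatorname{pd}=\operatorname{codim}$. You instead feed the upper bound into Auslander--Buchsbaum together with $\dim R_{\bfq,d}=n+1$, obtaining the lower bound and Cohen--Macaulayness simultaneously. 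Your route is shorter and leans on standard commutative algebra; the paper's route is self-contained within the pile-complex machinery and, as a bonus, actually exhibits a specific multidegree with $\beta_{N-n,\bfc}\ne 0$---a concrete datum that feeds into the later description of the highest syzygy in Theorem~\ref{introlastsyzygy}. Your parenthetical remark already sketches this combinatorial alternative, and indeed the paper's minimality argument is precisely a clean way to produce the $\check\bfc$ you describe there.
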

\begin{proof}
Let $p$ be the projective dimension of $R_{\bfq,d}$. As usual, set $\Acal=P_\bfq\cap\Z^{n+1}$. If $i>N-n$, then $N-n-i-1\leq-2$, so for any $\bfc\in\N\Acal$, 
$$\beta_{i,\bfc}=\dim\tilde H_{N-n-i-1}(\Gamma_{\check\bfc}(\Acal))=0.$$
 Hence $p\leq N-n$.

Now, let
$$\mathcal B:=\{\check\bfc\;|\;\bfc\in\N\Acal,\;\pilevar_{\check\bfc}(\Acal)\neq\varnothing\}=\{\bfb\in\N^{n+1}\;|\;\check\bfb\in\N\Acal\}.$$
Let $\bfb\in\mathcal B$ be such that $\bfq\cdot\bfb$ is minimal, so that for any $\bfa\in\Acal$, we know $\bfb-\bfa\notin\mathcal B$. Since $\check\bfb+\bfa\in\N\Acal$, this means $\bfb-\bfa\notin\N^{n+1}$ for all $\bfa\in\N\Acal$. Hence, we use (\ref{reducedfact}) to get
$$\beta_{N-n,\check\bfb}=\dim\tilde H_{-1}(\pilevar_\bfb(\Acal))\neq0.$$
This implies that $p\geq N+n$, proving the proposition.

The last assertion follows from $p+n=N$ since $n=\dim R_{\bfq,n}$ and $N=\dim S$.
\end{proof}

Let $M$ be a finitely generated $S$--module with the usual $\Z$--grading, and let $F_\bullet$ be its minimal $\Z$--graded free resolution. The \emph{Castelnuovo-Mumford regularity} (or just \emph{regularity}) of $M$ is the supremum of values $j-i$ such that $F_i$ has a minimal generator of degree $j$. Using (\ref{omega}), we see that the usual $\Z$--grading on $R_{\bfq,d}$ can be recovered from the $\Z^{n+1}$--multigrading via
$$\deg(X^\alpha)=\frac{\bfq\cdot\mdeg(X^\alpha)}{dr}.$$

In what follows, let $\rho$ be the remainder of $-q_0-\cdots-q_n$ upon division by $dr$ so that
\begin{equation}\label{rhodef}
dr>\rho\geq0,\quad \left\lceil\frac{q_0+\cdots+q_n}{dr}\right\rceil dr=q_0+\cdots+q_n+\rho
\end{equation}
%

\begin{proposition}\label{reg}
The Castelnuovo-Mumford regularity $\delta$ of $R_{\bfq,d}$ with respect to the usual $\Z$--grading on $S$ satisfies
\begin{equation}\label{regbound}
\delta\leq n+1-\left\lceil\frac{q_0+\cdots+q_n}{dr}\right\rceil.
\end{equation}
Equality holds if there exists $\bfb\in\N^{n+1}$ with $\bfq\cdot\bfb=\rho$. 

In particular for the case of $\proj^n$, when $\bfq=\bfone$, we have
$$\delta=n+1-\left\lceil\frac{n+1}{d}\right\rceil.$$
\end{proposition}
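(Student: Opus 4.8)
The plan is to convert the regularity into a statement about where the multigraded Betti numbers live and then push everything through Theorem~\ref{dualformula}. Writing $\Acal = dP_{\bfq}\cap\Z^{n+1}$ as usual and $Q := q_0+\cdots+q_n$, the regularity is $\delta = \sup\{\, j - i : \beta_{i,j}(R_{\bfq,d})\neq 0\,\}$, and by~(\ref{bettisum}) together with $\omega\cdot\bfc = \bfq\cdot\bfc/(dr)$ this reads
$$\delta = \sup\{\, \bfq\cdot\bfc/(dr) - i : \beta_{i,\bfc}(R_{\bfq,d})\neq 0 \,\}.$$
Any multidegree $\bfc$ in which $R_{\bfq,d}$, hence any of its syzygies, is supported lies in $\N\Acal\subseteq\Z\Acal$, so Theorem~\ref{dualformula} applies and tells us that $\beta_{i,\bfc}\neq0$ forces $\tilde H_{N-n-i-1}(\Gamma_{\check\bfc}(\Acal))\neq 0$. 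Two numerical inputs will carry the computation: every $\bfa\in\Acal$ satisfies $\bfq\cdot\bfa = dr$, whence $\bfq\cdot\bft = (N+1)dr$; and $\bfq\cdot\bfone = Q$.

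For the bound~(\ref{regbound}) I would first bound $\bfq\cdot\check\bfc$ from below. Put $k = N-n-i-1$; since reduced homology vanishes in degrees $\leq -2$ we may assume $k\geq -1$. If $k\geq 0$, nonvanishing of $\tilde H_k(\Gamma_{\check\bfc}(\Acal))$ forces $\Gamma_{\check\bfc}(\Acal)$ to contain a $k$-simplex, i.e.\ a $(k+1)$-element subset $\mathcal S\subseteq\Acal$ with $\sum_{\bfa\in\mathcal S}\bfa\leq\check\bfc$; dotting with $\bfq$ gives $\bfq\cdot\check\bfc\geq (k+1)dr$. If $k = -1$, nonvanishing of $\tilde H_{-1}$ just means $\Gamma_{\check\bfc}(\Acal) = \{\varnothing\}$, so $\check\bfc\in\N^{n+1}$ and $\bfq\cdot\check\bfc\geq 0 = (k+1)dr$. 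On the other hand $\bfc\in\Z\Acal$ gives $dr\mid\bfq\cdot\bfc$, and since $\bfq\cdot\bfc = (N+1)dr - \bfq\cdot\check\bfc - Q$ this yields $\bfq\cdot\check\bfc\equiv -Q\equiv\rho\pmod{dr}$. As $0\leq\rho<dr$, combining the two estimates gives $\bfq\cdot\check\bfc\geq(k+1)dr+\rho$ in every case. Substituting $i = N-n-k-1$ and $\bfq\cdot\bfc = (N+1)dr - \bfq\cdot\check\bfc - Q$ into $\bfq\cdot\bfc/(dr) - i$, then using $Q + \rho = \lceil Q/(dr)\rceil\,dr$ from~(\ref{rhodef}), one gets $\bfq\cdot\bfc/(dr) - i\leq n+1 - \lceil Q/(dr)\rceil$, which is~(\ref{regbound}).

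For the equality clause, assume there is $\bfb\in\N^{n+1}$ with $\bfq\cdot\bfb = \rho$; I claim $\beta_{N-n,\check\bfb}(R_{\bfq,d})\neq 0$ and that it realizes the bound. First, since $\bfq\cdot\bfb = \rho < dr = \bfq\cdot\bfa$ for every $\bfa\in\Acal$, no $\bfa$ satisfies $\bfa\leq\bfb$, so $\Gamma_{\bfb}(\Acal) = \{\varnothing\}$ and $\tilde H_{-1}(\Gamma_{\bfb}(\Acal)) = \kfield$ by~(\ref{reducedfact}). Second, $\check\bfb = \bft - \bfb - \bfone\geq\mathbf 0$ coordinatewise: from $q_ib_i\leq\bfq\cdot\bfb < dr$ we get $b_i\leq dr/q_i - 1$, while the vertex $(dr/q_i)\bfe_i$ of $dP_{\bfq}$ lies in $\Acal$, so $t_i\geq dr/q_i$. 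Third, $\check\bfb\in\Z\Acal$ (equivalently, by Proposition~\ref{samecomp}, $\check\bfb\in\N\Acal$): indeed $\bfq\cdot\check\bfb = (N+1)dr - \rho - Q$ is divisible by $dr$, and for $\Acal = dP_{\bfq}\cap\Z^{n+1}$ one has $\Z\Acal = \{\bfc\in\Z^{n+1} : dr\mid\bfq\cdot\bfc\}$. Now Theorem~\ref{dualformula} applies at $\bfc = \check\bfb$ (note $\check{\check\bfb} = \bfb$), giving $\beta_{N-n,\check\bfb} = \dim\tilde H_{-1}(\Gamma_{\bfb}(\Acal)) = 1$, and a short computation gives $\bfq\cdot\check\bfb/(dr) - (N-n) = n+1 - \lceil Q/(dr)\rceil$, so the bound is attained. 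For $\proj^n$ we have $\bfq = \bfone$ and $r = 1$, so $\rho$ is the remainder of $-(n+1)$ modulo $d$ and $\bfb = \rho\,\bfe_0\in\N^{n+1}$ always works; moreover $\Z\Acal = \{\bfc : d\mid c_0+\cdots+c_n\}$ is immediate since $\Acal$ contains $d\bfe_0$ and every $(d-1)\bfe_i + \bfe_j$. Hence $\delta = n+1 - \lceil (n+1)/d\rceil$.

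The part I expect to need the most care is the homological bookkeeping: keeping the reduced conventions straight (in particular reading ``$\tilde H_{-1}\neq 0$'' as $\Gamma = \{\varnothing\}$ rather than $\Gamma\neq\varnothing$), and the off-by-$\rho$ congruence step, which is exactly what promotes the naive estimate $j - i\leq n+1 - Q/(dr)$ to the stated ceiling. For general weights the one genuinely non-formal ingredient in the equality direction is the lattice identity $\Z\Acal = \{\bfc : dr\mid\bfq\cdot\bfc\}$ used to place $\check\bfb$ in $\Z\Acal$ so that the dual formula applies there; this is trivial for ordinary projective space, which is why the specialization to $\proj^n$ falls out cleanly.
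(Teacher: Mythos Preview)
Your argument is correct and follows essentially the same route as the paper: translate $\beta_{i,\bfc}$ via Theorem~\ref{dualformula} into $\tilde H_{N-n-i-1}(\Gamma_{\check\bfc})$, use that a nonzero $k$th reduced homology forces a $(k+1)$-element face to bound $\bfq\cdot\check\bfc$ from below, and for equality exhibit the witness $\check\bfb$ with $\bfq\cdot\bfb=\rho$ and invoke~(\ref{reducedfact}). The only cosmetic difference is how the ceiling appears: the paper derives $j-i\le n+1-\tfrac{Q}{dr}$ and then (implicitly) rounds using integrality of $j-i$, whereas you sharpen the lower bound on $\bfq\cdot\check\bfc$ to $(k+1)dr+\rho$ via the congruence $\bfq\cdot\check\bfc\equiv\rho\pmod{dr}$ before substituting; you are also more explicit than the paper about the lattice step $\Z\Acal=\{\bfc:dr\mid\bfq\cdot\bfc\}$, which the paper uses without comment when asserting $\check\bfb\in\Z\Acal$.
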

\begin{proof}
First observe that for any $\bfc\in\Z^{n+1}$
\begin{equation}\label{qccheck}
\bfq\cdot\check\bfc=\bfq\cdot\bft-\bfq\cdot\bfc-\bfq\cdot\bfone=(N+1)dr-jdr-q_0-\cdots-q_n.
\end{equation}
Suppose the $\Z$--graded Betti number $\beta_{i,j}\neq0$ for some $i,j\in\Z$. Then by~(\ref{bettisum}), there exists $\bfc\in\Z\Acal$ such that $\bfq\cdot\bfc=jdr$ with $\beta_{i,\bfc}\neq0$. By~(\ref{qccheck}), we have
$$\bfq\cdot\check\bfc=(N+1)dr-jdr-q_0-\cdots-q_n.$$
Since $\beta_{i,\bfc}=\dim\tilde H_{N-n-i-1}(\pilevar_{\check\bfc}(\Acal))\neq0$, then $\pilevar_{\check\bfc}(\Acal)$ must contain at least one reduced $(N-n-i-1)$--face. Hence, $\check\bfc$ is at least as large as the sum of $N-n-i$ elements of $\Acal$. Hence $\bfq\cdot\check\bfc\geq (N-n-i)dr$, so we can write
\begin{eqnarray*}
(N+1)dr-jdr-q_0-\cdots-q_n&\geq&(N-n-i)dr\\
n+1-\frac{q_0+\cdots+q_n}{dr}&\geq&j-i
\end{eqnarray*}
proving~(\ref{regbound}).


To prove the assertion about equality, assume $\bfb\in\N^{n+1}$ with $\bfq\cdot\bfb=\rho$. Then using~(\ref{qccheck}) we have
\begin{eqnarray*}
\bfq\cdot\check\bfb&=&(N+1)dr-q_0-\cdots-q_n-\rho\\
\frac{\bfq\cdot\check\bfb}{dr}&=&N+1-\left\lceil\frac{q_0+\cdots+q_n}{dr}\right\rceil
\end{eqnarray*}
Hence $\check\bfb\in\Z\Acal$, and so $\beta_{N-n,\check\bfb}=\dim\tilde H_{-1}(\pilevar_\bfb(\Acal))$. Since $\bfq\cdot\bfb=\rho<dr=\bfq\cdot\bfa$ for all $\bfa\in\Acal$, we know $\bfb\ngeq\bfa$ for all $\bfa\in\Acal$. Hence by~(\ref{reducedfact}), $\beta_{N-n,\check\bfb}=1$, and so
$$\delta\geq N+1-\left\lceil\frac{q_0+\cdots+q_n}{dr}\right\rceil-(N-n)=n+1-\left\lceil\frac{q_0+\cdots+q_n}{dr}\right\rceil.$$

If $\bfq=\bfone$, we set $\bfb=\rho\bfe_0$ and see that equality holds for $\proj^n$.
\end{proof}


A $\Z$--graded quotient ring $R$ of $S$ by a homogeneous ideal is called \emph{Gorenstein} if its localization $R_\mfrak$ to the irrelevant ideal of $S$ has finite injective dimension. An equivalent condition is that the $\Z$--graded free resolution of $R$ is self-dual up to a degree shift~\cite{MR1732040}. 
Indeed, a stronger result of Stanley states that $R$ is Gorenstein if and only if the $\Z$--graded Betti numbers of $R$ are symmetric in the following sense: $\beta_{i,j}=\beta_{p-i,\delta-j}$ where $p$ and $\delta$ are the projective dimension and regularity of $R$ respectively~\cite{MR0485835}. 

\begin{proposition}\label{gorenstein}
The module $R_{\bfq,d}$ is Gorenstein if $dr|q_0+\cdots+q_n$.
\end{proposition}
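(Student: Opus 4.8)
The plan is to use Stanley's symmetry criterion cited just above: it suffices to show that the $\Z$--graded Betti numbers of $R_{\bfq,d}$ satisfy $\beta_{i,j} = \beta_{p-i,\delta-j}$ where $p = N-n$ (Proposition~\ref{pdim}) and $\delta$ is the regularity. In fact I would prove the stronger multigraded symmetry $\beta_{i,\bfc}(R_\Acal) = \beta_{N-n-i,\,\check\bfc}(R_\Acal)$ for all $\bfc \in \Z\Acal$, which is almost immediate from the duality already in hand: by Theorem~\ref{dualformula}, $\beta_{i,\bfc} = \dim\tilde H_{N-n-i-1}(\pilevar_{\check\bfc}(\Acal))$, and applying Theorem~\ref{dualformula} again (noting $\check{\check\bfc} = \bfc$ by~(\ref{tcheck})) gives $\beta_{N-n-i,\,\check\bfc} = \dim\tilde H_{i-1}(\pilevar_{\bfc}(\Acal)) = \beta_{i,\bfc}$ by Theorem~\ref{bhtheorem} and Proposition~\ref{samecomp}. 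So the multigraded Betti table is symmetric under $(i,\bfc)\mapsto(N-n-i,\check\bfc)$ with no hypothesis on $\bfq$ at all.

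The role of the hypothesis $dr \mid q_0+\cdots+q_n$ is to make this multigraded symmetry descend to a $\Z$--graded symmetry of the correct form. Using~(\ref{bettisum}) and the computation~(\ref{qccheck}) (which holds verbatim here with $\Acal = dP_\bfq\cap\Z^{n+1}$, so $N+1 = \#\Acal$), if $\bfq\cdot\bfc = j\,dr$ then $\bfq\cdot\check\bfc = (N+1)dr - j\,dr - (q_0+\cdots+q_n)$. When $dr$ divides $q_0+\cdots+q_n$, write $s := (q_0+\cdots+q_n)/(dr)$; then $\omega\cdot\check\bfc = N+1-s-j$ is an integer, so $\check\bfc$ lies in the relevant degree and summing the multigraded symmetry over all $\bfc$ with $\omega\cdot\bfc = j$ yields $\beta_{i,j} = \beta_{N-n-i,\,N+1-s-j}$. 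Comparing with Stanley's form $\beta_{i,j}=\beta_{p-i,\delta-j}$ and $p = N-n$ forces $\delta = N+1-s = n+1-s$; one checks this is consistent with Proposition~\ref{reg}, since $\rho = 0$ in this case so the equality clause of Proposition~\ref{reg} applies (take $\bfb = \mathbf 0$) and gives exactly $\delta = n+1-\lceil (q_0+\cdots+q_n)/(dr)\rceil = n+1-s$. Hence the $\Z$--graded Betti numbers are symmetric in Stanley's sense, and $R_{\bfq,d}$ is Gorenstein.

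The only real subtlety is bookkeeping: making sure $\check\bfc \in \Z\Acal$ whenever $\bfc\in\Z\Acal$ so that Theorem~\ref{dualformula} genuinely applies on both sides (this follows since $\bft\in\N\Acal\subseteq\Z\Acal$ and $\bfone\in\Z\Acal$ — the latter because $\Acal$ contains a point of each coordinate hyperplane's complement forcing $\bfone$ into the lattice, or more simply because $\bfone = \bft - \bfc - \check\bfc$ is in $\Z\Acal$ exactly when both $\bfc,\check\bfc$ are, and for the Gorenstein argument we only need the implication for $\bfc$ where $\beta_{i,\bfc}\ne 0$, where $\check\bfc$ automatically records a face of $\pilevar_{\check\bfc}(\Acal)$), and confirming the degree shift matches. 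I expect essentially no obstacle beyond this indexing check; the heavy lifting was done in Theorem~\ref{intromaintheorem}, and the proposition is a clean corollary of its self-duality together with the divisibility hypothesis aligning the $\Z$--grading.
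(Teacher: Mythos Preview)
Your approach is the same as the paper's: combine Theorems~\ref{bhtheorem} and~\ref{dualformula} to obtain $\beta_{i,\bfc}=\beta_{N-n-i,\check\bfc}$ and then pass to the $\Z$--grading via~(\ref{bettisum}). However, your first paragraph contains a real error. The multigraded symmetry does \emph{not} hold ``with no hypothesis on $\bfq$ at all'': applying Theorem~\ref{dualformula} at $\check\bfc$ requires $\check\bfc\in\Z\Acal$, and this can fail. Already for the Veronese surface ($\bfq=\bfone$, $d=2$, $n=2$, so $N=5$) one has $\bfc=(2,2,2)\in\Z\Acal$ but $\check\bfc=(1,1,1)\notin\Z\Acal$, and indeed $\beta_{2,(2,2,2)}=2$ while $\beta_{N-n-2,\,\check\bfc}=\beta_{1,(1,1,1)}=0$.

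The divisibility hypothesis is therefore needed for the multigraded identity itself, not merely to align $\Z$--degrees afterward. The paper handles this exactly as your second paragraph almost does: using the characterization $\Z\Acal=\{\bfc\in\Z^{n+1}:dr\mid\bfq\cdot\bfc\}$ together with~(\ref{qccheck}), one gets $\bfq\cdot\check\bfc\equiv -(q_0+\cdots+q_n)\pmod{dr}$, so under the hypothesis $\bfc\in\Z\Acal\Leftrightarrow\check\bfc\in\Z\Acal$, and then both Theorems~\ref{bhtheorem} and~\ref{dualformula} apply to give the symmetry. Your third paragraph should be discarded: the argument ``$\bfone=\bft-\bfc-\check\bfc\in\Z\Acal$ exactly when both $\bfc,\check\bfc$ are'' is circular (you are trying to \emph{deduce} $\check\bfc\in\Z\Acal$), and the remark about coordinate hyperplanes does not establish $\bfone\in\Z\Acal$ either.
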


\begin{proof}
If $\bfc\in\Z\Acal$, then we know $dr|\bfq\cdot\bfc$. Then using~(\ref{qccheck}),
\begin{equation*}
\bfq\cdot\check\bfc\equiv-q_0-\cdots-q_n\;(\mathrm{mod}\;dr).
\end{equation*}
Then if $dr|q_0+\cdots+q_n$, then $dr|\bfq\cdot\bfc$ if and only if $dr|\bfq\cdot\check\bfc$, so $\bfc\in\Z\Acal$ if and only if $\check\bfc\in\Z\Acal$. Therefore, using Theorems~\ref{bhtheorem} and~\ref{dualformula}, we can say
$$\beta_{i,\bfc}(R_{\bfq,d})=\beta_{N-n-i,\check\bfc}(R_{\bfq,d}).$$
Indeed, in this case, we can recover the necessary symmetry of the $\Z$--graded Betti numbers.
\end{proof}

Our last theorem puts together some of these results to give an algorithm for finding the highest syzygy of $R_{\bfq,d}$.

\begin{theorem}\label{introlastsyzygy}
The highest nonzero syzygy of $R_{\bfq,d}$ is the $(N-n)$th syzygy. It is free with one generator in each multidegree $\bfc\in\N^{n+1}$ satisfying \begin{enumerate}
\item $dr|\bfq\cdot\bfc$, and
\item $\check\bfc\in\N^{n+1}$ with $\check\bfc\ngeq\bfa$ for all $\bfa\in\Acal$.
\end{enumerate}
In the case of $\proj^n$, when $\bfq=\bfone$, the $(N-n)$th syzygy of $R_{\bfq,d}$ is free of rank
$${dn-d\delta+d-1\choose n},\quad \delta=n+1-\left\lceil\frac{n+1}{d}\right\rceil$$
with one generator in each multidegree $\bfc\in\Z^{n+1}$ with $\check\bfc\in\N^{n+1}$ and 
\begin{equation}\label{remainder}
\bfone\cdot\check\bfc=d\left\lceil\frac{n+1}{d}\right\rceil-n-1.
\end{equation}
\end{theorem}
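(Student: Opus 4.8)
The plan is to read the top syzygy directly off the dual Betti-number formula of Theorem~\ref{dualformula}. Write $\Acal = P_\bfq \cap \Z^{n+1}$. By Proposition~\ref{pdim} the projective dimension of $R_{\bfq,d}$ is $N-n$, so its minimal free resolution $F_\bullet$ has $F_{N-n} \neq 0$ and $F_{N-n+1} = 0$. Hence the $(N-n)$th syzygy is $\Omega_{N-n} = \im d_{N-n} \cong F_{N-n}/\ker d_{N-n} = F_{N-n}/\im d_{N-n+1} = F_{N-n}$, which is free with $F_{N-n} \cong \bigoplus_\bfc S(-\bfc)^{\beta_{N-n,\bfc}(R_{\bfq,d})}$; moreover $d_{N-n} \neq 0$ and $\Omega_i = 0$ for $i > N-n$, so this is the highest nonzero syzygy. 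Everything now reduces to computing the numbers $\beta_{N-n,\bfc}$.

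Applying Theorem~\ref{dualformula} with $i = N-n$ gives, for $\bfc \in \Z\Acal$, the equality $\beta_{N-n,\bfc} = \dim \tilde H_{-1}(\Gamma_{\check\bfc}(\Acal))$, which by~(\ref{reducedfact}) equals $1$ when $\check\bfc \in \N^{n+1}$ and $\check\bfc \ngeq \bfa$ for every $\bfa \in \Acal$, and equals $0$ otherwise. The multigraded Betti numbers of $R_\Acal$ are supported on $\N\Acal$, and for $\Acal = P_\bfq \cap \Z^{n+1}$ one has $\bfc \in \N\Acal$ exactly when $\bfc \in \N^{n+1}$ and $dr \mid \bfq \cdot \bfc$ (equivalently $\Z\Acal = \{\bfc : dr \mid \bfq\cdot\bfc\}$, which is the normality of $dP_\bfq$ used in the proof of Proposition~\ref{samecomp} and in Proposition~\ref{gorenstein}). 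Matching these up shows that conditions (1) and (2) of the theorem are precisely the conditions under which $\beta_{N-n,\bfc} = 1$, while all other $\beta_{N-n,\bfc}$ vanish; this proves the general assertion.

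For the case of $\proj^n$ I would specialize $\bfq = \bfone$, so $dr = d$, $\Acal = \{\bfa \in \N^{n+1} : \bfone\cdot\bfa = d\}$, $N+1 = \binom{d+n}{n}$, and by permutation symmetry $\bft = t\bfone$ with $(n+1)t = \bfone\cdot\bft = d\binom{d+n}{n}$, so $t = \binom{d+n}{n+1}$. The one substantive observation is a greedy-fitting lemma: for $\check\bfc \in \N^{n+1}$ there is some $\bfa \in \Acal$ with $\bfa \le \check\bfc$ if and only if $\bfone\cdot\check\bfc \ge d$. Thus condition (2) becomes $0 \le \bfone\cdot\check\bfc \le d-1$. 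By~(\ref{qccheck}) with $\bfq = \bfone$ we have $\bfone\cdot\check\bfc = (N+1)d - \bfone\cdot\bfc - (n+1)$, so condition (1), $d \mid \bfone\cdot\bfc$, is equivalent to $\bfone\cdot\check\bfc \equiv -(n+1) \pmod d$; combined with $0 \le \bfone\cdot\check\bfc \le d-1$ this forces $\bfone\cdot\check\bfc = \rho := d\lceil\frac{n+1}{d}\rceil - (n+1)$, which is exactly~(\ref{remainder}). Since $\rho \le d-1 < d \le t$, the vector $\bfc = \bft - \bfone - \check\bfc$ automatically lies in $\N^{n+1}$, hence in $\N\Acal$. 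Finally, stars and bars gives $\#\{\check\bfc \in \N^{n+1} : \bfone\cdot\check\bfc = \rho\} = \binom{\rho+n}{n}$, and $\rho + n = d\lceil\frac{n+1}{d}\rceil - 1 = dn - d\delta + d - 1$ with $\delta = n+1 - \lceil\frac{n+1}{d}\rceil$, which is the stated rank.

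There is no substantial obstacle: the argument is an assembly of Theorem~\ref{dualformula}, Proposition~\ref{pdim}, and~(\ref{reducedfact}) followed by elementary bookkeeping. The points needing the most care are the greedy-fitting lemma (translating condition (2) in the $\proj^n$ case into a condition on $\bfone\cdot\check\bfc$) and the identification of $\N\Acal$ with $\{\bfc \in \N^{n+1} : dr \mid \bfq\cdot\bfc\}$, which I take from the normality of $dP_\bfq$ already invoked earlier in the paper.
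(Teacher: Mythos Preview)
Your proposal is correct and follows essentially the same route as the paper's proof: invoke Proposition~\ref{pdim} for the projective dimension, read off $\beta_{N-n,\bfc}$ from Theorem~\ref{dualformula} and~(\ref{reducedfact}), then specialize to $\bfq=\bfone$ via~(\ref{qccheck}). You supply more detail than the paper does---in particular the greedy-fitting lemma (which the paper asserts without proof), the verification that $\bfc\in\N^{n+1}$ is automatic in the $\proj^n$ case, and the explicit stars-and-bars identity $\rho+n=dn-d\delta+d-1$---but the underlying argument is the same.
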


\begin{proof}
That the $(N-n)$th syzygy is the highest nonzero syzygy is the content of Proposition~\ref{pdim}. It is free precisely because it is the highest syzygy. We know that $dr|\bfq\cdot\bfc$ if and only if $\bfc\in\Z\Acal$, and $\beta_{N-n,\bfc}=\dim\tilde H_{-1}(\pilevar_{\check\bfc})$. Hence the first statement follows from~(\ref{reducedfact}).

Now let us apply this to the case of $\bfq=\bfone$. Here, the first condition is equivalent to $\bfone\cdot\check\bfc\equiv -n-1\;(\mathrm{mod}\;d)$ by~(\ref{qccheck}), and assuming $\check\bfc\in\N^{n+1}$, the second condition is equivalent to $\bfone\cdot\check\bfc<d$. Hence both conditions are satisfied if and only if $\check\bfc\in\N^{n+1}$ and~(\ref{remainder}) holds.

Counting the set of $\check\bfc$ satisfying this condition yields the desired result.
\end{proof}

Notice that in the case of a Veronese module for $\proj^n$, the highest syzygy is generated in a single $\Z$--graded degree. Furthermore, Proposition~\ref{reg} tells us that in this case, that degree is precisely $N+1-\left\lceil\frac{q_0+\cdots+q_n}{dr}\right\rceil$. One might be tempted to conjecture that the highest syzygy is always generated in a single $\Z$--graded degree, or alternatively that equality always holds in Proposition~\ref{reg}. Both conjectures turn out to be false as illustrated by the next two examples.

\begin{example}
Consider the first Veronese embedding of $\proj(3,3,3,2,2,2,2)$. Using the standard defining polytope, we have
$$\Acal=\left\{(a_1,a_2,a_3,a_1',a_2',a_3',a_4')\in\N^7\left|\begin{array}{c}\text{$\sum a_i=2$, and $a_i'=0$ for all $i$;}\\ \text{or $a_i=0$ for all $i$, and $\sum a_i'=3$}\end{array}\right.\right\}.$$

Here, $r=6$, so if $\bfc\in\Z\Acal$, then
$$\bfq\cdot\check\bfc\equiv -3-3-3-2-2-2-2\equiv1\;(\mathrm{mod}\;6)$$
If equality were to hold in~(\ref{regbound}), the regularity would be $4$ and there would need to be $\check\bfc\in\N^7$ such that $\bfq\cdot\check\bfc=1$. However, no such $\check\bfc$ exists. In this example $N=25$ and so the projective dimension is $19$. Applying Theorem~\ref{introlastsyzygy}, we can calculate that the $19$th syzygy is free of rank $30$ and generated in $\Z$--graded degree $22$, and so the regularity is $3$. 
\end{example}

\begin{example}
Consider the first Veronese embedding of $\proj(3,2,1,1,1,1,1,1)$. Here
$$\Acal=\left\{\begin{array}{l}
(2,0,0,0,0,0,0,0),\\
(1,1,a_1',a_2',a_3',a_4',a_5',a_6') \text{ with $\sum a_i'=1$},\\
(1,0,a_1',a_2',a_3',a_4',a_5',a_6') \text{ with $\sum a_i'=3$},\\
(0,3,0,0,0,0,0,0),\\
(0,2,a_1',a_2',a_3',a_4',a_5',a_6') \text{ with $\sum a_i'=2$},\\
(0,1,a_1',a_2',a_3',a_4',a_5',a_6') \text{ with $\sum a_i'=4$,}\\
(0,0,a_1',a_2',a_3',a_4',a_5',a_6') \text{ with $\sum a_i'=6$}
\end{array}\right\}$$

Here $r=6$, $N=1+6+56+1+21+126+462-1=672$, and the sum of the weights is $11$. Hence if $\bfc\in\N\Acal$, $\bfq\cdot\check\bfc\equiv1\;(\mathrm{mod}\;6)$. The only points $\check\bfc$ with $\bfq\cdot\check\bfc=1$ are $\bfe_2,\ldots,\bfe_7$. However, the point $\check\bfc=(1,2,0,0,0,0,0,0)$ has $\bfq\cdot\check\bfc=7$ yet $\check\bfc\ngeq\bfa$ for all $\bfa\in\Acal$.

Therefore, in this example, the highest syzygy has six generators of $\Z$--graded degree $671$ and one in $\Z$--graded degree $670$. The regularity is $6$, so equality holds in (\ref{regbound}), but the highest syzygy is not generated in a single total degree.
\end{example}

We conclude with the observation that the following classical combinatorial theorem may be helpful in applying Theorem~\ref{introlastsyzygy}.

\begin{theorem}[Euler]
Given $\bfq\in\N^{n+1}$ and $k\geq0$, the number $T_\bfq(k)$ of points $\bfc\in\N^{n+1}$ such that $\bfq\cdot\bfc=k$ is equal to the coefficient on $X^k$ in the power series expansion of
$$S_\bfq(X):=\prod_{i=0}^n\frac{1}{1-X^{q_i}}.$$
In particular,
$$T_\bfq(k)=\frac{1}{2\pi i}\oint\frac{x^{-k-1}}{\prod_{i=0}^n1-x^{q_i}}dx$$
where the residue is taken around the origin.
\end{theorem}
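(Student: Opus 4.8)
The final statement is Euler's theorem on counting partitions with restricted parts, so the plan is to give the standard generating-function argument.

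First I would establish the generating-function identity by a direct combinatorial expansion. Each factor $\frac{1}{1-X^{q_i}}$ has the power series expansion $\sum_{c_i \geq 0} X^{q_i c_i}$, valid as a formal power series (or for $|X|<1$). Multiplying these $n+1$ series together and collecting terms, the coefficient of $X^k$ in $S_\bfq(X)$ is exactly the number of tuples $(c_0,\ldots,c_n)\in\N^{n+1}$ with $q_0 c_0 + \cdots + q_n c_n = k$, which is $\bfq\cdot\bfc = k$. This is precisely $T_\bfq(k)$. The only point requiring a word of care is the legitimacy of rearranging the product of infinite series, which is justified because for each fixed $k$ only finitely many terms contribute (since $q_i \geq 0$ and, more to the point, each $c_i \leq k$ when the $q_i$ are positive; if some $q_i = 0$ the corresponding factor is $\frac{1}{1-X^0}$, which is not a well-defined power series, so one should tacitly assume the $q_i$ are positive, as they are for a weights vector).

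Second, I would derive the contour-integral (residue) formula from the generating-function identity. Since $S_\bfq(X) = \sum_{k\geq 0} T_\bfq(k) X^k$ is holomorphic in a neighborhood of the origin, the Cauchy coefficient formula gives
$$T_\bfq(k) = \frac{1}{2\pi i}\oint \frac{S_\bfq(x)}{x^{k+1}}\,dx = \frac{1}{2\pi i}\oint \frac{x^{-k-1}}{\prod_{i=0}^n (1-x^{q_i})}\,dx,$$
where the contour is a small positively oriented circle around $0$ on which $|x|<1$ so that the series converges. This is just substituting the explicit product form of $S_\bfq$ into the standard formula recovering Taylor coefficients as residues.

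I do not expect any real obstacle here; this is a classical result and the proof is essentially a two-line manipulation of formal (or convergent) power series together with the Cauchy integral formula. The only subtlety worth flagging is the positivity hypothesis on the entries of $\bfq$ needed for the product to make sense and for the coefficient extraction to be a finite sum — but in the intended application $\bfq$ is a weights vector, so all $q_i \geq 1$ and this is automatic.
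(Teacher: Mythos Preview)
Your proof proposal is correct and is precisely the standard generating-function argument for this classical result. The paper itself does not give a proof but simply refers the reader to Section~12.1 of De~Concini--Procesi, so there is nothing further to compare; your write-up would in fact supply more detail than the paper does.
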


For a proof, see Section~12.1 of~\cite{MR2722776}.

\bibliographystyle{amsplain}

\begin{thebibliography}{1}

\bibitem{MR1290195}
Victor~V. Batyrev and David~A. Cox, \emph{On the {H}odge structure of
  projective hypersurfaces in toric varieties}, Duke Math. J. \textbf{75}
  (1994), no.~2, 293--338. \MR{1290195 (95j:14072)}

\bibitem{MR2556456}
Anders Bj{\"o}rner and Martin Tancer, \emph{Note: {C}ombinatorial {A}lexander
  duality---a short and elementary proof}, Discrete Comput. Geom. \textbf{42}
  (2009), no.~4, 586--593. \MR{2556456 (2010m:55021)}

\bibitem{MR672956}
Kenneth~S. Brown, \emph{Cohomology of groups}, Graduate Texts in Mathematics,
  vol.~87, Springer-Verlag, New York, 1982. \MR{672956 (83k:20002)}

\bibitem{MR1481087}
Winfried Bruns and J{\"u}rgen Herzog, \emph{Semigroup rings and simplicial
  complexes}, J. Pure Appl. Algebra \textbf{122} (1997), no.~3, 185--208.
  \MR{1481087 (98m:13020)}

\bibitem{MR2810322}
David~A. Cox, John~B. Little, and Henry~K. Schenck, \emph{Toric varieties},
  Graduate Studies in Mathematics, vol. 124, American Mathematical Society,
  Providence, RI, 2011. \MR{2810322 (2012g:14094)}

\bibitem{MR2722776}
Corrado De~Concini and Claudio Procesi, \emph{Topics in hyperplane
  arrangements, polytopes and box-splines}, Universitext, Springer, New York,
  2011. \MR{2722776 (2011m:52036)}

\bibitem{MR1871691}
X.~Dong, \emph{Canonical modules of semigroup rings and a conjecture of
  {R}einer}, Discrete Comput. Geom. \textbf{27} (2002), no.~1, 85--97,
  Geometric combinatorics (San Francisco, CA/Davis, CA, 2000). \MR{1871691
  (2002m:52013)}

\bibitem{MR1732040}
Craig Huneke, \emph{Hyman {B}ass and ubiquity: {G}orenstein rings}, Algebra,
  {$K$}-theory, groups, and education ({N}ew {Y}ork, 1997), Contemp. Math.,
  vol. 243, Amer. Math. Soc., Providence, RI, 1999, pp.~55--78. \MR{1732040
  (2001m:13001)}

\bibitem{2011arXiv1112.1677R}
M.~{Rossi} and L.~{Terracini}, \emph{{Weighted Projective Spaces from the toric
  point of view with computational applications}}, arXiv:1112.1677, ArXiv e-prints (2011).

\bibitem{MR0485835}
Richard~P. Stanley, \emph{Hilbert functions of graded algebras}, Advances in
  Math. \textbf{28} (1978), no.~1, 57--83. \MR{0485835 (58 \#5637)}



\end{thebibliography}
\providecommand{\bysame}{\leavevmode\hbox to3em{\hrulefill}\thinspace}
\providecommand{\MR}{\relax\ifhmode\unskip\space\fi MR }
\providecommand{\MRhref}[2]{%
  \href{http://www.ams.org/mathscinet-getitem?mr=#1}{#2}
}
\providecommand{\href}[2]{#2}

\end{document}